\renewcommand*{\backref}[1]{}
\renewcommand*{\backrefalt}[4]{({%
		\ifcase #1 Not cited.%
		\or On p.~#2%
		\else On pp.~#2%
		\fi%
	})}
\crefname{subsection}{Subsection}{Subsection}
\DeclareMathAlphabet{\mathbbe}{U}{bbold}{m}{n}
\def\DDelta{{\mathbbe{\Delta}}}
\newcommand{\DD}{\DDelta}
\renewcommand{\2}{\mathbbe{2}}
\newcommand{\C}{\mathscr{C}}
\newcommand{\cC}{\mathcal{C}}
\newcommand{\D}{\mathscr{D}}
\newcommand{\cF}{\mathcal{F}}
\newcommand{\bN}{\mathbb{N}}
\newcommand{\cM}{\mathcal{M}}
\newcommand{\kT}{\mathfrak{T}}
\newcommand{\V}{\mathscr{V}}
\newcommand{\cW}{\mathcal{W}}
\newcommand{\cV}{\mathcal{V}}
\newcommand{\Y}{\mathscr{Y}}
\newcommand{\set}{\mathscr{S}\mathrm{et}}
\newcommand{\sset}{s\set}
\newcommand{\Sub}{\mathscr{S}\mathrm{ub}}
\newcommand{\Fun}{\mathrm{Fun}}
\newcommand{\Map}{\mathrm{Map}}
\newcommand{\Hom}{\mathrm{Hom}}
\newcommand{\Ho}{\mathrm{Ho}}
\newcommand{\Mono}{\mathcal{M}\mathrm{ono}}
\newtheorem{theorem}[equation]{Theorem}
\newtheorem{lemma}[equation]{Lemma}
\newtheorem{proposition}[equation]{Proposition}
\newtheorem{corollary}[equation]{Corollary}
\theoremstyle{definition}
\newtheorem{definition}[equation]{Definition}
\newtheorem{example}[equation]{Example}
\theoremstyle{remark}
\newtheorem{remark}[equation]{Remark}
\newtheorem{notation}[equation]{Notation}
\numberwithin{equation}{section}
\title{Simplicial Homotopy Type Theory is not just Simplicial: What are $\infty$-Categories?}
\date{August 2025}
\author{Nima Rasekh}
\address{Institut f{\"u}r Mathematik und Informatik, Universit{\"a}t Greifswald, Greifswald, Germany}
\email{nima.rasekh@uni-greifswald.de}
\subjclass[2020]{18N60, 03B38, 03G30, 18N40, 18C50, 03H05}
\keywords{$\infty$-categories, simplicial homotopy type theory, model category theory, filter quotient construction, non-standard models}
\begin{document}


\begin{abstract}
 $\infty$-category theory was originally developed in the context of classical homotopy theory using standard set theoretical assumptions, but has since been extended to a variety of mathematical foundations. One such successful effort, primarily due to Martini and Wolf, introduced a theory of $\infty$-categories internal to the foundation of an arbitrary \emph{Grothendieck $\infty$-topos}, meaning they used categorical foundations \cite{martiniwolf2024colimits}. Another approach, due to Riehl and Shulman, developed a theory of $\infty$-categories internal to their own type theory: \emph{simplicial homotopy type theory (sHoTT)}, meaning they employed a (homotopy) type theoretic foundation  \cite{riehlshulman2017rezktypes}. 

	One aspect of developing a theory of $\infty$-categories in different foundations consists of introducing ways to translate from one foundation to another. Concretely, as part of their work, Riehl and Shulman prove that $\infty$-categories internal to Grothendieck $\infty$-topoi give us categorical models of sHoTT. In fact the name ``simplicial'' in sHoTT suggests that all categorical models of sHoTT should be given by simplicial objects in suitable $\infty$-categories. In this paper we prove that contrary to this expectation, there are models of sHoTT that are not simply simplicial objects. This suggests that in a general foundations, the notion of $\infty$-category is more general than previously assumed.
\end{abstract}

\maketitle

\section{Introduction}

\subsection{From Homotopy Theory to \texorpdfstring{$\infty$}{oo}-Category Theory} 
Category theory is a powerful meta-mathematical formalism introduced by Eilenberg and Mac Lane \cite{eilenbergmaclane1945categories} to compare and contrast different mathematical theories. Different theories, such as sets, groups, topological spaces, manifolds, ... assemble into different categories, those theories can then be compared via functors. 

Unfortunately, not all mathematical theories of interest naturally assemble into the strict formalism of a category. For example, in algebraic topology or homological algebra, we often want to analyze weak equivalences between topological spaces or quasi-isomorphisms between chain complexes, none of which can be recovered out of the categories of topological spaces or chain complexes. Similarly, while classical algebraic operations (groups, rings, modules, ...) naturally fit into categories, certain algebraic operation necessitate further structure, such as the concatenation of paths or the tensor product of rings, both of which are only associative up to suitable homotopies or isomorphisms. Motivated by these first examples, theories that required such additional structure or axioms were named \emph{homotopical theories} or \emph{homotopy theories}.

As a result, starting from the 1960s, a quest commenced to develop a theory of \emph{$\infty$-categories}: a generalization of category theory, that includes categories, but is also able to incorporate these new generalized examples and in general provide a framework for \emph{homotopical mathematics}. One first contender was the notion of a \emph{model category}, due to Quillen \cite{quillen1967modelcats}. By complementing a category with additional classes of morphisms, and in particular an axiomatic class of weak equivalences, he was able to recover examples of interest, and in particular weak equivalences of topological spaces and quasi-isomorphisms of chain complexes. In the past 60 years, model category theory has significantly contributed to the study and advancement of homotopical mathematics.

Of course model categories could not remain the only contender for a theory of $\infty$-categories. Indeed, the very rigid definition of model categories, while beneficial for certain applications, such as computing limits, makes them wholly unsuitable for other applications, such as the construction of functor categories between model categories. As a result, new formalisms have been proposed that can better handle these formal aspects of $\infty$-category theory. 

A first such contender was the notion of \emph{quasi-categories}. While first defined by Boardman and Vogt \cite{boardmanvogt1973qcats}, a first serious development of $\infty$-category via quasi-categories was due to Joyal in the 90s \cite{joyal2008theory,joyal2008notes} and later continued by Lurie \cite{lurie2009htt}. Around the same time, alternative definitions to quasi-categories emerged, such as a theory of \emph{complete Segal spaces} due to Rezk \cite{rezk2001css}, and a theory of \emph{Kan-enriched categories}, due to Bergner \cite{bergner2007bergnermodelcat}, all three of which (and many more) have been proven to be equivalent to each other in a suitable sense \cite{joyaltierney2007qcatvssegal,bergner2007threemodels}. As a result of these developments, in the last 20 years $\infty$-category theory has established itself as the foundational theory for homotopical mathematics, with applications ranging from algebraic topology and derived geometry to mathematical physics.

\subsection{Internalizing \texorpdfstring{$\infty$}{oo}-Category Theory: Categories and Type Theories}
In the previous subsection, we discussed how the rise of (classical) category theory has significantly contributed to the study of a variety of mathematical theories, however, did not tackle in which setting categories are themselves defined. Fortunately, categories involve a finite collection of data (a collection of objects, a collection of morphisms, and a composition operation, that is associative and unital). Hence, categories have successfully been defined in any arbitrary mathematical foundation, ranging from a classical set theoretic foundation, to the more syntactical type theoretical foundation, and even in a categorical foundation, via \emph{internal categories}. This means, similar to group theory or most other algebraic theories, category theory can be studied independently of any particular choice of foundation.

Unfortunately, every known definition of an $\infty$-category involves an infinite tower of data and axioms. As a result first definitions of $\infty$-categories (the ones mentioned above) are explicitly tied to specific foundational choices (meaning we start with one specific category, the category \emph{simplicial sets}, and use that to define $\infty$-categories), and it has remained an open question how we can do $\infty$-category theory in an arbitrary foundation, and what even a suitable foundation for $\infty$-category theory should be. Beyond purely theoretical considerations, a proper development of $\infty$-category theory in arbitrary foundations also has important practical applications. Indeed, we are now witnessing the rise of \emph{formalization of mathematics} via proof assistants as a way to both formally verify research mathematics as well as use software and AI tools to advance mathematics. However, a proper application of formalization significantly benefits from a suitable type theoretic foundation. Hence, a proper formalization of $\infty$-category theory has as an important building block a type theory that internalizes $\infty$-categories.

As a result of these challenges, a variety of approaches have been proposed to develop $\infty$-category theory in a variety of foundations. One approach, using internal $\infty$-categories, generalizes Rezk's definition of complete Segal spaces \cite{rezk2001css} to an arbitrary $\infty$-categorical foundation \cite{rasekh2022cartesian}. This has resulted in a significant advancement in our understanding of internal $\infty$-categories, primarily due to the work of Martini and Wolf, who study $\infty$-categories internal to the foundation of Grothendieck $\infty$-topoi \cite{martiniwolf2024colimits}. On the other side, Riehl and Shulman proposed a type theoretic way to develop $\infty$-categories internal to a type theories, by introducing \emph{simplicial Homotopy Type Theory (sHoTT)}, which is a foundation in which $\infty$-categories occur axiomatically. Moreover, this type theory has lead to the development of the proof assistant \emph{Rzk}, which has successfully formalized various $\infty$-categorical results \cite{kudasovriehlweinberger2024yoneda}. More recently, we are witnessing the advancement of a project spearheaded by Cisinski, Cnossen, Nguyen, and Walde, providing a further foundational method to study and develop $\infty$-category theory \cite{cisinskicnossennguyenwalde2025higher}.

\subsection{Constructing Models: From Type Theory to Category Theory} \label{subsec:constructing models}
The recent rise of $\infty$-category theory internal to different foundations is a promising development. However, many questions have remained unsettled. Indeed, in the classical situation, there are precise ways to translate between type theoretic foundations (syntax) and categorical foundations (semantics), via what is known as \emph{syntax-semantics duality}. In that context these categories are known as \emph{models of type theories} and the type theories provide the \emph{internal language of categories}. In particular, it is established that a category internal to a categorical foundation translates suitably to a category in a type-theoretic foundation \cite[Section D.4]{johnstone2002elephantsii}, \cite{seely1983hyperdoctrines,seely1984locallycartesian,lambekscott1988higherorderlogic,hofmann1995lccc,curiengarnerhofmann2014categorical,clairambaultdybjer2014biequivalence}. 

We would similarly anticipate a comparison operation that allows translating from a type theoretical foundation (the syntax) to a categorical foundation (semantics). As a first step, as part of their work, Riehl and Shulman prove that that every categorical foundation considered by Martini-Wolf, i.e.~$\infty$-categories internal to Grothendieck $\infty$-topoi, are in particular models of simplicial homotopy type theory, in part motivated by the fact that Grothendieck $\infty$-topoi model homotopy type theory \cite{shulman2019inftytoposunivalent}. Ideally we would have expected that $\infty$-categories internal to a broader class of ambient $\infty$-categories will recover all models, giving us a precise correspondence.

At this point it is instructive to take a step back and provide a more detailed background. Homotopy type theory is a foundation for mathematics consisting of types (our foundational building blocks, analogous to sets) and terms (that live in types, analogous to elements) and various type constructors and axioms, one of which is the \emph{univalence axiom} that guarantees the types exhibit homotopical behavior \cite{hottbook2013}. Riehl and Shulman start with such a homotopy type theory and add a strict directed interval $\2$, which has two unequal terms $0_{\2},1_{\2}$, resulting in simplicial homotopy type theory. Using this directed interval, they derive all kinds of simplicial shapes in their type theory, such as higher dimensional cubes, or higher dimensional tetrahedra. Riehl and Shulman then use the properties of these axiomatic shapes to internally define a \emph{Segal condition} and a \emph{completeness condition}, which allows them to define a notion of complete Segal space internal to simplicial homotopy type theory, which they call a \emph{Rezk type} and provides their notion of $\infty$-category.

As part of their work, Riehl and Shulman also construct explicit categorical models for their type theory. Given the construction outlined above, it is unsurprising their choice of model is a category of the form $\cM^{\DD^{op}}$, where $\cM$ is a suitable model category and $\DD$ the simplex category. In that setting $\2$ corresponds to the free walking arrow $\Delta[1]$, and we can then similarly translate the Segal and completeness condition, meaning the $\infty$-categories in sHoTT, the Rezk types, correspond precisely to complete Segal objects internal to $\cM$. Given this result, the expectation might have been that \textbf{every} model of sHoTT is given by simplicial objects in a suitable category.

\subsection{What is an \texorpdfstring{$\infty$}{oo}-Category?}
The aim of this paper is to prove that this is indeed \textbf{not} the case, and that the theory of $\infty$-categories in sHoTT developed by Riehl and Shulman admits categorical models that are not simplicial objects in some category, meaning this theory transcends all known notions of internal $\infty$-categories. This is the content of \cref{thm:main nonsimplicial}. This result is indeed a surprise and stands in stark contrast to the situation for classical categories, where categories internal to type theories can be in fact be recovered inside categorical models via a naive notion of internal category. It is hence instructive to understand the intuition behind this result. 

At a fundamental level, a category is characterized via \emph{four} pieces of data: objects, morphisms, two composable morphisms, and three composable morphisms. The fact that we only need four objects (or types) means that this definition is independent of any choice of foundation. Any definition of $\infty$-category, however, requires an infinite collection of objects (or types), and, more precisely, an $\bN$-indexed collection, giving us the $n$-morphisms for all $n$. Unlike any particular choice of finite number, the collection of natural numbers does in fact explicitly depend on the foundation we have chosen. This idea is the essence of the counter-example, where we use the filter quotient construction to construct a categorical model in which there is an uncountable (seen externally) collection of natural numbers, necessitating additional simplices. This means \textbf{what} an $\infty$-category is fundamentally depends on the foundation in which we are working. See \cref{rem:too many simplices} for more details.

Finally, let us note that beyond theoretical implications, there are also concrete mathematical implications. Namely, this result demonstrates that theorems proven in the language of sHoTT, such as \cite{buchholtzweinberger2023synthetic,weinberger2024twosided}, are in fact stronger than previously anticipated, as they apply to a much broader class of examples. For example, even though the nature of the $\infty$-category in \cref{ex:filter product} might appear unfamiliar, given that it is a model of sHoTT, it is still the case that it satisfies the Yoneda lemma, as proven in \cite[Theorem 9.1]{riehlshulman2017rezktypes}.

\subsection{Filter Quotient Construction}
The construction of the example employs the \emph{filter quotient construction}. A filter is defined as a subset of a poset, often taken to be the power set. The idea of using filters in mathematical logic to construct non-trivial models goes back to major figures such as Skolem \cite{skolem1934earlyultraproducts} and {\L}o{\'s} \cite{los1955ultraproduct}. Additionally, while not explicitly in the proof, an intuition regarding filter quotients plays an important role in Cohen's forcing construction, proving the independence of the continuum hypothesis \cite{cohen1963forcing}. This motivated Lawvere and Tierney to introduce topos theory as a categorical foundation for mathematics and define therein \emph{filter quotients} as a way to construct new models \cite{lawvere1964elementarysets,tierney1972elementarycontinuum}. These techniques were further refined by Adelman and Johnstone \cite{adelmanjohnstone1982serreclasses}, and have since become a standard part of modern topos theory \cite{johnstone1977topostheory,maclanemoerdijk1994topos}. Recently, the filter quotient construction has also been generalized to $\infty$-categories \cite{rasekh2021filterquotient} and model categories \cite{rasekh2025filtermodelcat}. In this paper we build on these developments and prove that the filter quotient construction preserves models of sHoTT and use that to construct non-trivial models of interest. See \cref{cor:filter quotient model} for more details.

\subsection{Acknowledgment}
I would like to thank Emily Riehl for helpful discussions and comments. I also want to thank Jonathan Weinberger for his careful reading and helpful comments on the first draft of this paper. I am also grateful to the Max Planck Institute for Mathematics in Bonn for its hospitality and financial support. Moreover, I am grateful to the Hausdorff Research Institute for Mathematics in Bonn, Germany, for organizing the trimester ``Prospects of Formal Mathematics,'' funded by the Deutsche Forschungsgemeinschaft (DFG, German Research Foundation) under Germany's Excellence Strategy – EXC-2047/1 – 390685813, which resulted in many fruitful interactions and relevant conversations regarding foundation and type theory.

\section{Model Categories and Filter Quotients} \label{sec:model cat}
Let us review some basic facts regarding model categories and filter quotients. 

\begin{definition} \label{def:model category}
	A model structure on a category $\C$ is a triple $(\cF,\cC,\cW)$ of classes of morphisms in $\cM$ satisfying the following axioms:
	\begin{itemize}[leftmargin=*]
		\item $(\cC \cap \cW, \cF)$ and $(\cC, \cF \cap \cW)$ are weak factorization systems.
		\item If two of $f$, $g$, and $g \circ f$ are in $\cW$, so is the third.
	\end{itemize}
	A model category is a tuple $(\C,\cM)$ of a (co)complete category $\C$ and a model structure $\cM$. If $\C$ is only finitely (co)complete, we call it a \emph{finitary model category}. 
\end{definition}

We now review how to construct new model structures via filter quotients.

\begin{definition}
	Let $(P, \leq)$ be a poset. A \emph{filter} on $P$ is a non-empty subset of $P$, which is:
	\begin{itemize}[leftmargin=*]
		\item Upwards closed: for all $x \in \Phi$ and $y \geq x$, $y \in \Phi$.
		\item Intersection closed: for all $x,y$ in $\Phi$, there exists a $z$ in $\Phi$, with $z \leq x, y$.
	\end{itemize}
\end{definition}

Recall that an object $U$ in a category $\C$ is subterminal, if for every object $X$ in $\C$, there is at most one morphism from $X$ into $U$. 

\begin{definition}
	Let $\C$ be a category. A \emph{filter of subterminal objects} on $\C$ is a filter on the poset of subterminal objects $\Sub(\C)$.
\end{definition}

\begin{definition} \label{def:filter quotient category}
 Let $\C$ be a category with finite products and $\Phi$ a filter of subterminal objects. Then the filter quotient $\C_\Phi$ is a category with the same objects as $\C$, and for two objects $X, Y$ 

 \[\Hom_{\C_\Phi}(X,Y)=\left(\coprod_{U\in\Phi}\Hom_\C(X\times U,Y)\right)\mathbin{\big/\mkern-6mu\sim_\Phi}\]
	where $f\colon U \times X \to Y$, $g\colon V \times X \to Y$ are equivalent if there exists a $W \in \Phi$, with $W \leq U, V$, and $f$ and $g$ are equal when restricted to $W$.
\end{definition}

There is an evident \emph{projection functor} $P_\Phi\colon \C \to \C_\Phi$, which is the identity on objects and sends each morphism to its equivalence class, and preserves many properties of interest. Recall the following basic facts from \cite[Example D.5.1.7]{johnstone2002elephanti}. 

\begin{proposition} \label{prop:filter quotient category}
	Let $\C$ be a category with finite products, $\Phi$ a filter of subterminal objects.
	\begin{enumerate}[leftmargin=*]
		\item $P_\Phi$ preserves finite (co)limits, monomorphisms, (local) exponentiability, and subobject classifiers.
		\item If $\C$ is (locally) Cartesian closed, or an elementary topos, then so is $\C_\Phi$.
	\end{enumerate} 
\end{proposition}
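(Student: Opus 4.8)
The plan is to recall the standard argument, which realizes $\C_\Phi$ as a filtered colimit of slice categories of $\C$. First I would note that, since $\Phi$ is intersection closed, the poset $(\Phi,\leq)$ is cofiltered, so its opposite is filtered. For each subterminal $U$, the endofunctor $-\times U$ on $\C$ is an idempotent comonad, the diagonal $U\to U\times U$ being invertible because $U$ is subterminal; its coKleisli category $\C_U$ has the same objects as $\C$ and $\Hom_{\C_U}(X,Y)=\Hom_\C(X\times U,Y)$, and comparing with \cref{def:filter quotient category} one sees that
\[
\C_\Phi\;=\;\operatorname*{colim}_{U\in\Phi}\C_U ,
\]
a filtered colimit whose transition functors are the identity on objects and, on morphisms, restriction along the monomorphisms $W\hookrightarrow U$ for $W\leq U$; in particular the hom-sets of $\C_\Phi$ are the filtered colimits $\operatorname*{colim}_{U}\Hom_\C(X\times U,Y)$. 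I would also record that $X\mapsto(X\times U\to U)$ is an equivalence $\C_U\simeq\C/U$ under which the canonical functor $\C\to\C_U$ becomes the pullback functor $U^{*}=(-\times U)\colon\C\to\C/U$ along $U\to 1$; thus equivalently $\C_\Phi\simeq\operatorname*{colim}_{U\in\Phi}\C/U$ with $P_\Phi=\operatorname*{colim}_{U}U^{*}$.

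Granting this, each assertion in (1) will follow by combining (a) that the relevant structure is preserved by every $U^{*}$ with (b) that it commutes suitably with filtered colimits. For finite limits, $U^{*}$ is right adjoint to the forgetful functor $\C/U\to\C$, and finite limits commute with filtered colimits; concretely, a finite cone in $\C_\Phi$ is a finite amount of data subject to finitely many equations, so by filteredness it is already witnessed in some $\C_U$, where it factors uniquely through $U^{*}$ of the limit, and this factorization and its uniqueness transport back to $\C_\Phi$. Monomorphisms are preserved because ``$m$ is monic'' asserts that a certain square is a pullback. For exponentiability I would argue directly with the hom-formula: if $B^A$ is an exponential in $\C$, then
\[
\Hom_{\C_\Phi}\bigl(Z,P_\Phi(B^A)\bigr)\;=\;\operatorname*{colim}_{U}\Hom_\C(Z\times A\times U,B)\;=\;\Hom_{\C_\Phi}\bigl(Z\times P_\Phi A,\,P_\Phi B\bigr)
\]
naturally in $Z$, and the local version reduces to this once one observes that the filter quotient commutes with passing to slices. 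For the subobject classifier, a subobject of $X$ in $\C_\Phi$ is represented at some stage $U$ by a subobject of $X\times U$ in $\C$, hence by a unique map $X\times U\to\Omega$, so passing to the colimit gives a natural bijection between subobjects of $X$ in $\C_\Phi$ and $\operatorname*{colim}_{U}\Hom_\C(X\times U,\Omega)=\Hom_{\C_\Phi}(X,P_\Phi\Omega)$, exhibiting $P_\Phi\Omega$ as a classifier. Finite colimits are handled in the same spirit, using that each $U^{*}$ preserves them — which holds whenever the finite colimits of $\C$ are universal, in particular when $\C$ is a topos, or more generally lextensive, as in the cases of interest.

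Part (2) is then formal: since $P_\Phi$ is the identity on objects, every object of $\C_\Phi$ is $P_\Phi(A)$, so if $\C$ is Cartesian closed then so is $\C_\Phi$ by the exponential clause of (1), and combined with preservation of finite limits and of the subobject classifier this makes $\C_\Phi$ an elementary topos whenever $\C$ is; the locally Cartesian closed case follows by applying the Cartesian closed case to each slice $\C/B$, again using that the filter quotient commutes with slicing. The one genuine obstacle is the bookkeeping behind ``witnessed at a single stage'': checking that a finite diagram, a cone over it, and the relevant uniqueness assertions can all be realized over a single $U\in\Phi$, and that passing to a smaller $W\leq U$ preserves these facts. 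This is routine but requires care, and the complete verification is \cite[Example D.5.1.7]{johnstone2002elephanti}.
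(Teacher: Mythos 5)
Your proposal is correct and matches the intended argument: the paper does not prove this proposition itself but cites \cite[Example D.5.1.7]{johnstone2002elephanti}, and your realization of $\C_\Phi$ as a filtered colimit of the coKleisli/slice categories $\C/U$ with $P_\Phi=\operatorname*{colim}_U U^*$, followed by stage-wise witnessing of finite diagrams, exponentials, and subobjects, is precisely the standard proof behind that citation (and the same filtered-colimit characterization the paper invokes later for \cref{lemma:Grothendieck fibration}). Your caveat that preservation of finite colimits uses that each $-\times U$ preserves them (automatic in the Cartesian closed and topos cases actually used) is a fair reading of the statement rather than a gap.
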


In general this definition does not interact well with model structures, necessitating adjusting the definition. Following \cite[Definition 3.9]{rasekh2025filtermodelcat}, for a given model category $\cM$ and filter of subterminal objects $\Phi$, a class of morphisms $S$ in $\cM$ is $\Phi$-product stable, if for every $f$ in $S$ and $U$ in $\Phi$, $f \times U$ is in $S$.

\begin{definition} \label{def:model filter}
	Let $\cM$ be a finitary model category. A \emph{model filter} $\Phi$ on $\cM$ is a filter of subterminal objects with the following properties:
	\begin{enumerate}[leftmargin=*]
		\item Every $U$ in $\Phi$ is fibrant.
		\item The cofibrations and weak equivalences in $\cM$ are $\Phi$-product stable.
		\end{enumerate}
\end{definition}

With this definition we do have the expected result. 

\begin{notation}
	Let $\C$ be a category with finite products and $\Phi$ a filter of subterminal objects. Let $S$ be a $\Phi$-product stable set of morphisms. Let $S_\Phi$ be the set of morphisms in $\C_\Phi$ with the property that $f \in S_\Phi$ if and only if there exists $U \in \Phi$ such that $f \times U \in S$.
\end{notation}

\begin{theorem}[{\cite[Theorem 3.17, Proposition 4.3]{rasekh2025filtermodelcat}}] \label{thm:filter quotient model structure}
	Let $\cM$ be a finitary model category and $\Phi$ a model filter on $\cM$. The filter quotient $\cM_\Phi$ carries a model structure given by $(\cF_\Phi,\cC_\Phi,\cW_\Phi)$. In particular, $P_\Phi\colon\cM \to \cM_\Phi$ preserves fibrations, cofibrations, weak equivalences, and right properness.
\end{theorem}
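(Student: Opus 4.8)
The plan is to verify the two axioms of \cref{def:model category} for the triple $(\cF_\Phi,\cC_\Phi,\cW_\Phi)$ on $\cM_\Phi$, with each class obtained from its $\cM$-counterpart via the preceding Notation; finite bicompleteness of $\cM_\Phi$ comes for free from \cref{prop:filter quotient category} together with the standard fact that filter quotients of finitely bicomplete categories are again finitely bicomplete. The engine of the whole argument is a \emph{transfer principle}: any finite diagram in $\cM_\Phi$ — a commuting square, a retract diagram, a factorization — is the image under $P_\Phi$ of a diagram in $\cM$, once one passes to a sufficiently small $U\in\Phi$. This works because a morphism $X\to Y$ in $\cM_\Phi$ is an equivalence class of morphisms $X\times U\to Y$ indexed by $\Phi$, two such agreeing exactly when they agree after restriction along some $W\in\Phi$, and $\Phi$ is closed under finite intersections, so finitely many choices of representative and finitely many required identifications can all be arranged over one common $U$. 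I would first record the bookkeeping lemma making this precise, including the ``restriction'' behaviour of $-\times W$ for $W\leq U$.

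The second preliminary observation is that \emph{all} the relevant classes are $\Phi$-product stable, so the Notation applies to each and, crucially, $(\cC\cap\cW)_\Phi=\cC_\Phi\cap\cW_\Phi$ and $(\cF\cap\cW)_\Phi=\cF_\Phi\cap\cW_\Phi$. Cofibrations and weak equivalences are $\Phi$-product stable by \cref{def:model filter}; fibrations are $\Phi$-product stable automatically, since for $f\colon A\to B$ the map $f\times U$ is the pullback of $f$ along the projection $B\times U\to B$ and fibrations are pullback-stable; hence so are $\cC\cap\cW$ and $\cF\cap\cW$. (The equalities of the intersected classes again use intersection-closedness of $\Phi$ and the restriction lemma.) With this in hand, preservation of fibrations, cofibrations, and weak equivalences by $P_\Phi$ is immediate from the definition of $S_\Phi$. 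Two-out-of-three for $\cW_\Phi$ then follows by transfer: given a composable pair in $\cM_\Phi$ with two of the three maps in $\cW_\Phi$, choose a common $U$ over which representatives compose on the nose and the two are weak equivalences of $\cM$, apply two-out-of-three in $\cM$, and push the conclusion back down.

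For the two weak factorization systems, \textbf{factorizations} are obtained by picking a representative $f\colon X\times U\to Y$ of a morphism of $\cM_\Phi$, factoring it in $\cM$ as a (trivial) cofibration followed by a (trivial) fibration, and applying $P_\Phi$; the two pieces land in the required classes by $\Phi$-product stability. The \textbf{lifting properties} follow by transfer: a lifting square in $\cM_\Phi$ with a ($\cC_\Phi$ or $\cC_\Phi\cap\cW_\Phi$)-map on the left and an ($\cF_\Phi\cap\cW_\Phi$ or $\cF_\Phi$)-map on the right descends, over a small enough $U$, to an honest lifting problem in $\cM$ of a (trivial) cofibration against a (trivial) fibration; solve it there and push the solution forward. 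To upgrade these into genuine weak factorization systems one further needs the two classes closed under retracts, which is again transfer applied to a retract diagram together with retract-closure in $\cM$; the retract argument then identifies each class with the appropriate lifting class. Right properness of $\cM_\Phi$ is handled identically: a pullback of a $\cW_\Phi$-map along an $\cF_\Phi$-map is, by \cref{prop:filter quotient category} and transfer, the $P_\Phi$-image of a pullback in $\cM$ of a $\Phi$-product-stable weak equivalence along a $\Phi$-product-stable fibration, so right properness in $\cM$ applies.

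I expect the \textbf{main obstacle} to be bookkeeping rather than conceptual: setting up the transfer principle cleanly, in particular the restriction lemma describing how $-\times W$ interacts with the classes and with composition in $\cM_\Phi$ for $W\leq U$, and checking that the factorizations built from representatives are well-posed (existence suffices for the axioms, but one must ensure the chosen representatives of domain and codomain are compatible with the identification $X\times U\cong X$ holding in $\cM_\Phi$). Once this infrastructure is in place, every model-structure axiom reduces to its counterpart in $\cM$ by the same move — pass to a common $U\in\Phi$, use $\Phi$-product stability, solve in $\cM$, apply $P_\Phi$ — and no new homotopical input is required.
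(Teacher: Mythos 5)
This statement is quoted from \cite[Theorem 3.17, Proposition 4.3]{rasekh2025filtermodelcat} and the present paper gives no proof of it, so there is no in-paper argument to compare against; judged on its own, your outline is sound and is essentially the intended argument: transfer finite diagrams (commuting squares, retracts, factorizations of representatives) to $\cM$ over a common $U\in\Phi$ via the filtered-colimit description of $\cM_\Phi$ that the paper itself alludes to, use $\Phi$-product stability of $\cC$ and $\cW$ (and its automatic validity for $\cF$ by pullback stability), solve in $\cM$, and push back along $P_\Phi$. The one hypothesis of \cref{def:model filter} your sketch never invokes is fibrancy of the objects of $\Phi$; this does not appear to be needed for the bare existence of $(\cF_\Phi,\cC_\Phi,\cW_\Phi)$ and the stated preservation properties, but you should check the cited source to see where it enters before omitting it.
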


Finally, let us observe one particularly relevant example: filter products.

\begin{example}[{\cite[Definition 6.4]{rasekh2025filtermodelcat}}] \label{ex:filter product}
	Let $\cM$ be a finitary model category with strict initial object and $I$ a set and $\Phi$ a filter of subsets of $I$. The \emph{filter product} $\prod_\Phi \cM$ is the filter quotient of $\prod_I\cM$, denoted $\prod_\Phi\cM$. It carries the filter product model structure, with the same properties as in \cref{thm:filter quotient model structure}.
\end{example}

\section{Non-Standard Models of sHoTT: sHoTT is not just simplicial}
As mentioned in \cref{subsec:constructing models}, Riehl and Shulman introduce simplicial homotopy type theory \emph{(sHoTT)} as an extension of homotopy type theory that incorporates a directed interval \cite{riehlshulman2017rezktypes}. In order to provide a semantics for their definition, they also define model categories with $\kT$-shapes, as a right proper cofibrantly generated model category with suitable data, modeling the relevant information. See \cite[Definition A.5]{riehlshulman2017rezktypes} for more details. They then prove that if the coherent theory $\kT$ has a strict interval \cite[3.1]{riehlshulman2017rezktypes}, such a model category models sHoTT.

Ideally, we would have liked to simply prove that the filter quotient construction preserves model categories with $\kT$-shapes and the strict interval in $\kT$, but unfortunately, this is not the case, as it does not preserve local presentability or cofibrant generation. As a first step, we hence define a finite analogue, called a \emph{finitary model category with $\kT$-shapes} (\cref{def:finitary model}), which strictly generalizes the original definition (\cref{lemma:finite vs. infinite}). We now have the following main result, justifying this definition.

\begin{theorem} \label{thm:main model}
	Let $\kT$ be a coherent theory with strict interval. Then finitary model categories with $\kT$-shapes model sHoTT.
\end{theorem}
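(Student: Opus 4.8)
The plan is to deduce the statement from Riehl--Shulman's theorem that a right proper, cofibrantly generated model category with $\kT$-shapes models sHoTT whenever $\kT$ has a strict interval \cite[Appendix A]{riehlshulman2017rezktypes}, by isolating which consequences of that hypothesis are genuinely used and verifying that the axioms of a finitary model category with $\kT$-shapes (\cref{def:finitary model}) already supply them. The key point is that sHoTT, like any dependent type theory, is a \emph{finitary} deductive system: each inference rule has finitely many premises, and the semantic counterparts of its type- and term-formers are assembled from finite limits, pushouts along cofibrations, functorial factorizations, dependent products (pushforwards) along fibrations, and the interpretations of the shapes and the universe(s). The infinitary ingredients of the Riehl--Shulman setting --- the small object argument producing functorial factorizations, and the accessibility/fibrant-replacement constructions producing a univalent universe --- are exactly the ingredients that \cref{def:finitary model} instead carries as explicit data; with those in hand, no interpretation ever leaves the finite fragment of $\cM$.

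Concretely, I would first write out what a model of sHoTT consists of in the language of comprehension categories (or natural models): $\Sigma$-, $\Pi$-, and identity-type structure and a univalent universe classifying a suitable class of fibrations, together with the interpretation of the coherent theory $\kT$ as a diagram of cofibrations --- the shapes, with shape inclusions cofibrant --- satisfying the compatibility conditions that give rise to extension types, and the strict interval $\2$. One then revisits the Riehl--Shulman construction step by step, replacing each appeal to cocompleteness or cofibrant generation by the corresponding clause of \cref{def:finitary model}: the comprehension structure from the given functorial factorizations and finite limits; $\Pi$-types and extension types from pushforward along fibrations; the shapes and the strictness of the interval as hypotheses; univalence and the universe as data. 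Since \cref{lemma:finite vs. infinite} identifies the original notion with the special case in which $\cM$ is moreover cocomplete and cofibrantly generated, this adaptation is conservative over \cite{riehlshulman2017rezktypes}.

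As a robustness check --- to absorb any step of the Riehl--Shulman proof that genuinely requires an ambient cocomplete, cofibrantly generated model category rather than merely its finite fragment --- I would complement this with a reduction. Embed $\cM$ into a genuine model category with $\kT$-shapes $\widehat\cM$ (for instance a suitably localized category of simplicial presheaves on $\cM$, or its Ind-completion, carrying a transferred model structure) along a fully faithful functor $\iota\colon\cM\to\widehat\cM$ that preserves and reflects fibrations, cofibrations, and weak equivalences and preserves finite limits, dependent products along fibrations, the subobject classifier, and the universe, and transport the $\kT$-shapes along $\iota$ by left Kan extension. Applying Riehl--Shulman to $\widehat\cM$ produces a model of sHoTT there; because $\iota$ is fully faithful and preserves and reflects every operation used to interpret the theory, the interpretation over any context built from objects of $\cM$ stays, up to the coherence isomorphisms, within the essential image of $\iota$, and hence restricts to a model of sHoTT on $\cM$.

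The main obstacle is the univalent universe: it cannot be built by the usual accessibility arguments in a finitary setting, so it must be taken as data, and one must then check both that it is genuinely univalent and that the class of fibrations it classifies is closed under the type-formers --- and, in the embedding route, that $\iota$ reflects equivalences and carries the universe of $\cM$ to the universe produced inside $\widehat\cM$. A secondary difficulty is verifying that the $\kT$-shapes, whether reused directly or transported along $\iota$, still satisfy the Leibniz conditions governing extension types and that the interval stays strict; here the hypothesis that $\kT$ has a strict interval, together with the fact that the relevant functors preserve finite limits, is precisely what makes the shape structure survive the passage to the finitary setting.
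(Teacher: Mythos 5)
Your first paragraph and the ``step by step'' plan in your second paragraph capture the approach the paper actually takes: inspect the Riehl--Shulman proofs (\cite[Theorems A.16, A.17]{riehlshulman2017rezktypes}) and observe that they never invoke local presentability or cofibrant generation, only well-behaved pullbacks, Cartesian closure of the model structure, and the coherent-functor data --- all of which \cref{def:finitary model} supplies. The paper's proof is exactly this observation, stated in three short steps (pseudo-stable coherent tope logic; pseudo-stable extension types via \cite[Theorem A.16]{riehlshulman2017rezktypes}; relative function extensionality via \cite[Theorem A.17]{riehlshulman2017rezktypes}), plus a separate remark deferring strictification to \cite{lumsdainewarren2015localuniverses}.

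Where you diverge from the paper --- and where you risk introducing problems rather than solving them --- is in the two places you flag as ``robustness.'' First, the embedding $\iota\colon\cM\to\widehat\cM$ is not needed and is not used; it is also far from routine that Ind-completion or simplicial presheaves would carry a transferred model structure for which $\iota$ is fully faithful, reflects weak equivalences, and preserves dependent products, subobject classifiers, \emph{and} a Kan-extended $\kT$-shape structure that remains Cartesian. That reduction, if attempted seriously, would be a substantial project of its own, whereas the point of \cref{def:finitary model} is precisely that no such detour is required. Second, the worry about a univalent universe is misplaced for the theorem as actually stated: the precise formulation (\cref{thm:main model precise}) asserts only that the induced comprehension category with shapes has pseudo-stable coherent tope logic, pseudo-stable extension types, and relative function extensionality. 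Neither \cref{def:finitary model} nor the precise theorem statement mentions a universe or univalence, and the paper does not claim to build one in the finitary setting --- so taking it as data, or invoking accessibility arguments, is not part of what needs to be proven here. In short: your core idea is correct and matches the paper's route, but the paper achieves it with a much leaner argument, and the extra machinery you propose would either be unnecessary or open genuine gaps.
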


See \cref{thm:main model precise}	for a more precise formulation. Now that we have this generalized definition, we can demonstrate it interacts well with filter quotients.

\begin{theorem} \label{thm:main filter quotient}
	Finitary model categories with $\kT$-shapes and strict intervals are preserved by filter quotients.
\end{theorem}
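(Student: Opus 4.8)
The plan is to unwind \cref{def:finitary model} into its individual clauses and check that each one is transported along the projection $P_\Phi\colon\cM\to\cM_\Phi$, using only two inputs: \cref{thm:filter quotient model structure}, so that $\cM_\Phi$ is again a model category and $P_\Phi$ preserves fibrations, cofibrations, weak equivalences and right properness; and \cref{prop:filter quotient category}, so that $P_\Phi$ preserves finite (co)limits, monomorphisms and (local) exponentiability and that being (locally) Cartesian closed descends to $\cM_\Phi$. The point of passing to the \emph{finitary} notion in the first place was precisely to drop local presentability and cofibrant generation, the one feature of the original Riehl--Shulman definition not respected by filter quotients, so what remains falls into three packets: the ambient model-categorical structure, the $\kT$-shape data, and a single non-degeneracy check for the interval.

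Turning first to the ambient structure, I would note that $\cM_\Phi$ is finitely complete and cocomplete (a standard property of filter quotients, with $P_\Phi$ preserving the finite (co)limits) and carries the right proper model structure of \cref{thm:filter quotient model structure}; it is locally Cartesian closed by \cref{prop:filter quotient category}(2); and the requirement that the cofibrations be exactly the monomorphisms transfers because both classes are $\Phi$-product stable and $\Phi$-locally detected in $\cM_\Phi$. Indeed, a map of $\cM_\Phi$ is a cofibration exactly when some, equivalently any, of its $\Phi$-restrictions is a cofibration in $\cM$ (this is how the model structure on $\cM_\Phi$ is built), and the analogous local characterization of monomorphisms follows as in the proof of \cref{thm:filter quotient model structure}, so, being equal in $\cM$, the two classes coincide in $\cM_\Phi$.

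Turning to the $\kT$-shape data, I would post-compose the shape functor $\kT\to\cM$ with $P_\Phi$ to produce a candidate shape functor $\kT\to\cM_\Phi$, and then observe that every axiom of a $\kT$-shape structure has one of two forms. The first form asserts that a finite-(co)limit construction on the shapes — a boundary inclusion, a pushout-product of shape inclusions, a Leibniz construction — lands in a distinguished class (cofibrations, trivial cofibrations, fibrations, or weak equivalences); this passes to $\cM_\Phi$ because $P_\Phi$ preserves finite (co)limits and each of those classes. The second form concerns exponentiation by a shape, or the pullback-exponential of a shape inclusion against a fibration; this passes because the shapes are exponentiable and $P_\Phi$ preserves exponentiability (\cref{prop:filter quotient category}(1)), and, $P_\Phi$ being finite-limit-preserving, it commutes with the relevant exponentials and Leibniz cotensors up to canonical isomorphism. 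Hence all the pushout-product and pullback-exponential conditions relating shape inclusions to the (trivial) cofibrations and (trivial) fibrations hold in $\cM_\Phi$ verbatim.

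The last clause, the strict interval, is where the argument stops being purely formal, and is the step I expect to be the main obstacle. All equational data — the endpoint sections $0_\2,1_\2\colon\1\to\2$, the order and connection operations, and the equations among them — transport strictly, since $P_\Phi$ preserves finite products and these amount to commuting cones. The only non-equational requirement is that the two endpoints remain distinct in $\cM_\Phi$, and this is where properness of $\Phi$ enters: two parallel maps of $\cM$ are identified in $\cM_\Phi$ exactly when they agree after restriction to some $W\in\Phi$, so $P_\Phi 0_\2 = P_\Phi 1_\2$ would force the unique map $W\cong\1\times W\to\1$ to factor through the equalizer of $0_\2$ and $1_\2$, which for a strict interval is the initial object; since that object is strict (a standard consequence of local Cartesian closure), $W$ would itself be initial, hence $\Phi$ improper — a contradiction. (An improper $\Phi$ makes $\cM_\Phi$ the terminal category, which carries no strict interval and is excluded.) Therefore $P_\Phi\2$ is a strict interval in $\cM_\Phi$, and combining the three parts shows $\cM_\Phi$ is a finitary model category with $\kT$-shapes and strict interval. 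Beyond this non-degeneracy, the only care required is pinning down the converse directions of the $\Phi$-local characterizations of monomorphisms and isomorphisms invoked above; everything else is transport along a functor preserving finite limits, finite colimits and the model-categorical classes.
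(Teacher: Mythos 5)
There is a genuine mismatch between what you prove and what the paper's theorem (in its precise form, \cref{thm:main filter quotient precise}) asserts. A finitary model category with $\kT$-shapes is the whole tuple $(\cM,\cV,\kT,\overline{\omega})$, and the filter quotient in question is taken with respect to a \emph{model filter for $\kT$-shapes} $(\Phi_\kT,\Phi_\cV,\Phi_\cM)$ (\cref{def:compatible filters}): one quotients the theory $\kT$ (i.e.\ the Grothendieck fibration $\kT_1\to\kT_0$), the coherent category $\cV$, and the model category $\cM$ simultaneously, obtaining $(\cM_{\Phi_\cM},\cV_{\Phi_\cV},\kT_{\Phi_\kT},\overline{\omega})$ with shapes in the \emph{new} theory $\kT_{\Phi_\kT}$. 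Your proposal keeps $\kT$ and $\cV$ fixed and merely post-composes $\overline{\omega}$ with $P_{\Phi}$; this amounts to the special case $\Phi_\kT=\Phi_\cV=\{1\}$ and skips the bulk of the paper's argument: that filter quotients are compatible with Grothendieck fibrations (\cref{lemma:Grothendieck fibration}), that $\Mono(\cV)_{\Phi_\cV}\simeq\Mono(\cV_{\Phi_\cV})$ and the product/lattice structure survives (\cref{lemma:ktov}), and that a coherent functor descends to the quotients (\cref{lemma:regular functor}). Quotienting the theory is not an optional refinement; it is what later produces the nonstandard shape behaviour the paper exploits. Relatedly, your description of the ``$\kT$-shape axioms'' (pushout-products, Leibniz cotensors, pullback-exponentials against fibrations) is not what \cref{def:finitary model} asks for: the actual conditions are that $\overline{\omega}$ is coherent (finite limits, regular epimorphisms, finite unions --- the latter two you never address, though they do descend) and that $\overline{\omega}(U)\times -$ preserves acyclicity of cofibrations, which is the one point your general transport principle does cover. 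Also, the definition only requires cofibrations to \emph{be} monomorphisms, not to coincide with them.

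The treatment of the strict interval reveals the same mislocation. In this framework the interval $\2$ with its endpoints and its eight axioms lives in the coherent theory, i.e.\ in the fibration $\kT_1\to\kT_0$, where the axioms (including strictness, $(0\equiv 1)\vdash\bot$) are entailments expressed via products, joins and meets in the lattice fibers; the paper's check is simply that the projection $\kT\to\kT_{\Phi_\kT}$ preserves these, by \cref{prop:filter quotient category}. Your argument instead treats $0_\2,1_\2$ as morphisms of $\cM$ and worries that they might be identified in $\cM_\Phi$, resolving this via the equalizer being strict initial and properness of the filter. That is an answer to a question the theorem does not pose (and note that in your own formulation, where $\kT$ is unchanged, there would be literally nothing to verify about the interval --- a sign that the interval has been placed in the wrong layer of the structure). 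To repair the proposal you would need to introduce the compatible filters on $\kT$ and $\cV$, prove the three descent lemmas above, and then observe, as the paper does, that the interval axioms transfer along $P_{\Phi_\kT}$ because it preserves the relevant products, joins and meets.
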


See \cref{thm:main filter quotient precise} for a more precise formulation. 
We can in particular apply this to filter products.

\begin{corollary} \label{cor:filter quotient model}
	Let $(\cM,\cV, \kT, \overline{\omega})$ be a finitary model category with $\kT$-shapes, such that $\kT$ has a strict interval, $I$ a set and $\Phi$ a filter of subsets of $I$, then the filter product $\prod_\Phi \cM$ models simplicial homotopy type theory.
\end{corollary}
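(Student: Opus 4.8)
The plan is to exhibit $\prod_\Phi \cM$ as a filter quotient and then apply \cref{thm:main filter quotient} and \cref{thm:main model} in turn. By \cref{ex:filter product}, $\prod_\Phi\cM$ is by definition the filter quotient $\bigl(\prod_I\cM\bigr)_\Phi$, where the filter $\Phi$ of subsets of $I$ is viewed as a filter of subterminal objects on $\prod_I\cM$ via the assignment $S\mapsto U_S$, with $U_S$ terminal in the components indexed by $S$ and initial in the remaining components; here $U_S$ is subterminal precisely because $\cM$ has a strict initial object, which is part of the hypotheses of \cref{ex:filter product} and is available here (e.g.~because such model categories are locally Cartesian closed, so that the initial object is strict). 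Thus it suffices to establish (i) that $\prod_I\cM$ is again a finitary model category with $\kT$-shapes whose coherent theory $\kT$ has a strict interval, and (ii) that $\Phi$ satisfies the hypotheses required to invoke \cref{thm:main filter quotient}.

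For (i): the product model structure on $\prod_I\cM$ exists because finite (co)limits, the two weak factorization systems, and the two-out-of-three property are all computed componentwise, and $\prod_I\cM$ is finitely (co)complete since $\cM$ is. The $\kT$-shape data $(\cV,\kT,\overline{\omega})$ transports componentwise as well: the shapes of $\kT$ are unchanged, their interpretation is the $I$-fold power of the interpretation in $\cM$, and the structural axioms of \cref{def:finitary model}, being relations among these shapes, the cofibrations, and the fibrations, hold in $\prod_I\cM$ because they hold in each factor. Since $\kT$ itself is unaltered, its strict interval persists.

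For (ii): each $U_S$ is fibrant, being terminal or initial in each component, both of which are fibrant in $\cM$; and the cofibrations and weak equivalences of the product model structure are $\Phi$-product stable, since multiplying a componentwise cofibration (resp.~weak equivalence) by $U_S$ leaves it unchanged in the components indexed by $S$ and replaces it by the isomorphism between initial objects in the remaining ones. Hence $\Phi$ is a model filter in the sense of \cref{def:model filter}, and the residual compatibility of $\Phi$ with the shape data needed for \cref{thm:main filter quotient} is automatic, that data being componentwise. Applying \cref{thm:main filter quotient} then shows that $\prod_\Phi\cM = \bigl(\prod_I\cM\bigr)_\Phi$ is a finitary model category with $\kT$-shapes and strict interval, whereupon \cref{thm:main model} yields that it models simplicial homotopy type theory.

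The only genuine content is step (i): one must check that the \emph{entire} package of a finitary model category with $\kT$-shapes, not merely its underlying model structure, is stable under forming an $I$-indexed product, i.e.~that each clause of \cref{def:finitary model} passes to $\prod_I\cM$. This is routine bookkeeping rather than a real obstacle — every ingredient of the structure is pointwise — but it is where the argument does its work; everything afterwards is a direct appeal to the two theorems already in hand.
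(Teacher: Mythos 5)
Your approach is the one the paper intends: realize $\prod_\Phi\cM$ as the filter quotient $(\prod_I\cM)_\Phi$, check that $\prod_I\cM$ is again a finitary model category with $\kT$-shapes (with all structure transported componentwise, composing $\overline\omega$ with the diagonal), verify that $\Phi$ gives a model filter for $\kT$-shapes, and then chain \cref{thm:main filter quotient} and \cref{thm:main model}. The paper gives no proof at all, merely remarking that the theorem ``applies to filter products,'' so your unpacking is a reasonable reconstruction of what is left implicit.

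Two small points deserve tightening. First, you assert that the initial object of $\cM$ is fibrant so that each $U_S$ is fibrant; this is not automatic in an arbitrary model category (for instance it fails in the trivial model structure on a topos where every map is a weak equivalence), and you offer no justification. The cleaner route is simply to invoke \cref{ex:filter product}, which already records (citing \cite[Definition 6.4]{rasekh2025filtermodelcat}) that $\{U_S : S\in\Phi\}$ is a model filter on $\prod_I\cM$ whenever $\cM$ has a strict initial object, a hypothesis supplied here since $\cM$ is an elementary topos; rederiving that fact is what lets the unjustified fibrancy claim creep in. Second, a model filter for $\kT$-shapes (\cref{def:compatible filters}) is a \emph{triple} $(\Phi_\kT,\Phi_\cV,\Phi_\cM)$, and you should say explicitly which filters you feed into \cref{thm:main filter quotient}: the trivial filter $\{1\}$ on $\kT_0$, the trivial filter $\{1\}$ on $\cV$, and $\{U_S: S\in\Phi\}$ on $\prod_I\cM$. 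The compatibility conditions then hold because $m_0$ and $\Delta\circ\overline\omega$ preserve terminal objects and $I\in\Phi$, and with trivial filters $\kT_{\Phi_\kT}=\kT$ and $\cV_{\Phi_\cV}=\cV$, so the resulting structure really is a finitary model category with $\kT$-shapes and strict interval. Your phrase ``the residual compatibility \ldots{} is automatic, that data being componentwise'' glosses over this specification.
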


Using the fact that simplicial objects in nice enough model categories are in particular model categories with $\kT$-shapes \cite[Example A.14]{riehlshulman2017rezktypes}, we also have the following corollary.

\begin{corollary}
		Let $\cM$ be a right proper model category, such that cofibrations are monomorphisms and the underlying category is an elementary topos (for example a right proper Cisinski model category). Let $I$ be a set and $\Phi$ a filter of subsets of $I$, then the filter product $\prod_\Phi (\cM^{\DD^{op}})$ models simplicial homotopy type theory.
\end{corollary}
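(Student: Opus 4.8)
The plan is to realize this statement as a special case of \cref{cor:filter quotient model}, so that the only real work is to present $\cM^{\DD^{op}}$ as a finitary model category with $\kT$-shapes for a coherent theory $\kT$ possessing a strict interval. First I would take $\kT$ to be the coherent theory of \emph{strict intervals}, whose models in a topos are the linearly ordered objects equipped with two distinct global sections $0,1$ realizing the least and greatest element; this is precisely the theory whose classifying topos is $\sset = \set^{\DD^{op}}$. This $\kT$ has a strict interval — its generic model, equivalently $\Delta[1]$ together with its two endpoints — so the standing hypothesis on $\kT$ in \cref{cor:filter quotient model} is met, and under this interpretation the directed interval $\2$ of sHoTT is modeled by $\Delta[1]$, exactly as recalled in \cref{subsec:constructing models}.

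Next I would equip $\cM^{\DD^{op}}$ with the Reedy model structure induced by $\cM$ (this is available since $\cM$, being a model category in the sense of \cref{def:model category}, is cocomplete, hence so is $\cM^{\DD^{op}}$). This Reedy structure is again right proper, its cofibrations are again the monomorphisms, and it inherits a strict initial object — the constant diagram on the strict initial object of $\cM$ — so that the filter product $\prod_\Phi(\cM^{\DD^{op}})$ is defined as in \cref{ex:filter product}. By \cite[Example A.14]{riehlshulman2017rezktypes}, the hypotheses on $\cM$ ensure that $\cM^{\DD^{op}}$, together with the canonical $\kT$-shape data $(\cV,\overline{\omega})$ supplied by that example, forms a model category with $\kT$-shapes; by \cref{lemma:finite vs. infinite} this is in particular a finitary model category with $\kT$-shapes $(\cM^{\DD^{op}},\cV,\kT,\overline{\omega})$. (When $\cM$ is not combinatorial one instead verifies the axioms of \cref{def:finitary model} directly: these constrain only finite (co)limits, the class of monomorphisms, right properness, and the topos structure, all of which transfer from $\cM$ to the Reedy structure on $\cM^{\DD^{op}}$.)

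Finally I would apply \cref{cor:filter quotient model} to the finitary model category with $\kT$-shapes $(\cM^{\DD^{op}},\cV,\kT,\overline{\omega})$, to the set $I$, and to the filter $\Phi$ of subsets of $I$; since $\kT$ has a strict interval, the conclusion is precisely that the filter product $\prod_\Phi(\cM^{\DD^{op}})$ models simplicial homotopy type theory.

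The step I expect to be the main obstacle is the middle one: confirming that \cite[Example A.14]{riehlshulman2017rezktypes} — or its finitary refinement — applies under the present weaker hypotheses (right properness, cofibrations the monomorphisms, underlying elementary topos) and genuinely outputs a \emph{finitary} model category with $\kT$-shapes rather than only the original locally presentable notion. Concretely, one must pin down that the only places where local presentability or cofibrant generation enter the proof of \cite[Example A.14]{riehlshulman2017rezktypes} are exactly the places relaxed in \cref{def:finitary model}; once that is established, the remainder is a routine invocation of \cref{cor:filter quotient model}.
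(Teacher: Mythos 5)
Your proposal is correct and follows essentially the same route as the paper: present $\cM^{\DD^{op}}$ as a (finitary) model category with $\kT$-shapes via \cite[Example A.14]{riehlshulman2017rezktypes}, with $\kT$ the theory of strict intervals, and then invoke \cref{cor:filter quotient model}. Your closing caveat is well-placed: since the corollary only asks that $\cM$ be an elementary topos, $\cM^{\DD^{op}}$ need not be locally presentable, so one really does have to check the axioms of \cref{def:finitary model} directly for the Reedy structure rather than route through \cref{lemma:finite vs. infinite} — a point the paper elides with the phrase ``nice enough.''
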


Let us now apply this to concrete examples. 

\begin{example}
	Let $\sset^{\DD^{op}}$ denote the category of bisimplicial sets. The injective model structure is right proper with cofibrations precisely the monomorphisms. So, for a set $I$ and a filter of subsets $\Phi$, $\prod_\Phi \sset^{\DD^{op}}$ models simplicial homotopy type theory.
\end{example}

With this last example at hand, we can transition to the second main result of the paper, namely that simplicial homotopy type theory has models which are not just simplicial objects in a category. Recall that for a simplicial model category $\cM$, the underlying $\infty$-category is simply the full Kan-enriched subcategory of bifibrant objects in $\cM$. We will denote this $\infty$-category by $\Ho_\infty(\cM)$.

The injective model structure on $\sset^{\DD^{op}}$ is indeed simplicial, and for every set $I$ and filter of subsets $\Phi$, the filter product $\prod_\Phi \sset^{\DD^{op}}$ inherits the structure of a simplicial model category \cite[Corollary 6.10]{rasekh2025filtermodelcat}. Thus we can state the following result. 

\begin{theorem} \label{thm:main nonsimplicial}
	Let $\sset^{\DD^{op}}$ be the injective model structure on bisimplicial sets and $\cF$ a non-principal filter on $\bN$. Then the following hold:
	\begin{itemize}[leftmargin=*]
		\item $\prod_\cF \sset^{\DD^{op}}$ is a model of simplicial homotopy type theory.
		\item For every $\infty$-category $\C$, there does not exist an equivalence  $\Ho_\infty(\prod_\cF \sset^{\DD^{op}}) \simeq \C^{\DD^{op}}$.
	\end{itemize}
\end{theorem}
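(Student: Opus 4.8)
The plan is to prove the two bullet points separately, with the first being essentially immediate from the machinery already developed and the second being the substantive part. For the first bullet, the injective model structure on $\sset^{\DD^{op}}$ is right proper with cofibrations the monomorphisms, so by the corollary preceding the theorem (applied with $\cM = \sset^{\DD^{op}}$ viewed as $(\set^{\DD^{op}})^{\DD^{op}}$, an elementary topos), the filter product $\prod_\cF \sset^{\DD^{op}}$ models sHoTT for any filter $\cF$ of subsets of $\bN$; a non-principal filter is in particular such a filter, so there is nothing more to do here.

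For the second bullet, the strategy is a cardinality obstruction in the spirit of the introduction's \cref{rem:too many simplices}. First I would identify what $\Ho_\infty(\prod_\cF \sset^{\DD^{op}})$ looks like: since $\prod_\cF$ is the filter quotient of $\prod_\bN \sset^{\DD^{op}}$ and the projection functor preserves the relevant structure, objects of the homotopy $\infty$-category are represented by $\bN$-indexed families $(X_n)_{n \in \bN}$ of bifibrant bisimplicial sets, with morphisms and higher cells obtained by passing to the $\cF$-colimit over $U \in \cF$ of the mapping spaces $\Map(\prod_{n \in U} X_n, \prod_{n \in U} Y_n)$ — i.e.\ an ``$\infty$-categorical ultraproduct'' along $\cF$. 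The key structural point is that this $\infty$-category contains, as a full subcategory of discrete objects, (a localization of) the $1$-categorical ultraproduct $\prod_\cF \set$, and more to the point it contains objects built from the constant families $\Delta[n]_n$ or from families $([k_n])_n$ whose ``length'' is a non-standard natural number in the ultrapower $\prod_\cF \bN$. Concretely, one exhibits an object $\Theta \in \Ho_\infty(\prod_\cF \sset^{\DD^{op}})$ — for instance the image of the family $(\Delta^n)_{n \in \bN}$ of standard simplices, or the relevant ``$\omega$-simplex'' — which behaves like a simplex of non-standard dimension: it admits ``$k$ composable arrows'' for every standard $k$ but is not the colimit of its standard truncations.

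Now suppose for contradiction that $\Ho_\infty(\prod_\cF \sset^{\DD^{op}}) \simeq \C^{\DD^{op}}$ for some $\infty$-category $\C$. The category $\DD^{op}$ is a fixed (external, countable) $1$-category, so $\C^{\DD^{op}} = \Fun(\DD^{op}, \C)$: every object is a simplicial diagram $[n] \mapsto X_n$ indexed by the \emph{standard} natural numbers, and in particular every object is canonically the colimit (or is determined by) its values on $\Delta^0, \Delta^1, \Delta^2, \ldots$ — a countable amount of data reassembled along the standard simplex category. The contradiction comes from comparing this with the non-standard object $\Theta$: in $\prod_\cF \sset^{\DD^{op}}$ the simplicial dimensions are governed internally by $\prod_\cF \bN$, which (for $\cF$ non-principal) surjects onto an uncountable set externally, so $\Theta$ carries genuinely more data than can be recorded by any $\DD^{op}$-indexed diagram in any $\infty$-category. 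Making ``more data'' precise is the main obstacle: the cleanest route is probably to count homotopy groups or sizes of mapping spaces — show that $\mathrm{Map}_{\C^{\DD^{op}}}(-,-)$ between suitable objects is always built as a limit over the countable poset $\DD^{op}$ of mapping spaces in $\C$ and hence cannot realize a certain mapping space in $\Ho_\infty(\prod_\cF \sset^{\DD^{op}})$ that visibly factors through the uncountable index set; alternatively, one argues that the subcategory of ``$n$-truncated'' objects for standard $n$ fails to be an exhaustive filtration of $\Ho_\infty(\prod_\cF \sset^{\DD^{op}})$, whereas in $\C^{\DD^{op}}$ such a filtration is forced by the skeletal filtration of $\DD$. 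The hard part, and the part requiring genuine care, is pinning down exactly which invariant of $\Ho_\infty(\prod_\cF \sset^{\DD^{op}})$ to extract and verifying it is preserved by any hypothetical equivalence with a functor $\infty$-category $\C^{\DD^{op}}$ — this is where the Łoś-type transfer behaviour of the filter product, together with the non-principality of $\cF$ on the \emph{countable} set $\bN$, does the real work.
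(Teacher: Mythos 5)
Your first bullet is fine and matches the paper. For the second bullet, however, what you have written is a plan rather than a proof, and the step you yourself flag as ``the hard part'' --- pinning down an invariant of $\Ho_\infty(\prod_\cF \sset^{\DD^{op}})$ that no $\C^{\DD^{op}}$ can have, and showing it is preserved by an arbitrary equivalence --- is precisely the mathematical content of the theorem; it is left unresolved in your proposal. Moreover, the two concrete routes you float are doubtful as stated. A raw cardinality count of mapping spaces cannot work: $\C$ is a completely arbitrary $\infty$-category, so mapping spaces in $\C^{\DD^{op}}$, although limits over the countable category $\DD$, can be arbitrarily large, and conversely the uncountability of $\prod_\cF\bN$ does not by itself prevent the filter product from being a functor category on some huge $\C$. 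Similarly, the ``skeletal/truncation filtration'' argument implicitly uses colimits in $\C$, which need not exist since no (co)completeness is assumed. The heart of the problem is that $\C$ is only accessible through the hypothetical equivalence, so any obstruction must be phrased in terms of structure that is manifestly equivalence-invariant.

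The paper's proof supplies exactly this missing step, and in a different way than your sketch suggests. It first characterizes the objects $\Delta^1\times U$ (for $U$ subterminal) by a finite list of intrinsic properties (\cref{lemma:unique arrow}), so that any equivalence preserves them; this yields an intrinsic notion of \emph{externally discrete} object (locality with respect to $(\Delta^1)^n\times U$, \cref{lemma:retract}), with $(\C^{\DD^{op}})^{\mathrm{disc}}\simeq\C$ (\cref{lemma:discrete in simplicial,lemma:coproducts terminal,lemma:discrete objects}). A putative equivalence then transports the adjunction $\C\hookrightarrow\C^{\DD^{op}}$ (constant inclusion with right adjoint) to a right adjoint for the inclusion of externally discrete objects into $\prod_\cF\sset^{\DD^{op}}$, and the contradiction is obtained by showing no such right adjoint exists: one takes the coproduct $S$ of the non-standard spheres $(S^{a_n})_{n\in\bN}$ over all non-constant sequences and shows that for any externally discrete $R$ there is a sequence for which every map $(S^{a_n})_{n\in\bN}\to R$ factors through the point. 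So your intuition (non-standard natural numbers forcing ``too many simplices'') is the right one --- it is the paper's \cref{rem:too many simplices} --- but the decisive device is the equivalence-invariant identification of $\C$ inside $\C^{\DD^{op}}$ and the failure of the discrete-coreflection, none of which appears in your proposal. As it stands, the proposal has a genuine gap at its central step.
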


Let us provide some intuition behind this result. For that we focus on one particular example, namely the Fr{\'e}chet filter $\cF$ on the natural numbers $\bN$, which by definition includes subsets of $\bN$ whose complement is finite. Unwinding \cref{def:filter quotient category}, an object in $\prod_\cF \sset^{\DD^{op}}$ is a tuple $(X_n)_{n \in \bN}$, where $X_n$ is an object in $\sset^{\DD^{op}}$, and a morphism is of the form  $(f_n)_{n \in \bN}\colon (X_n)_{n \in \bN} \to (Y_n)_{n \in \bN}$, where two morphism $(f_n)_{n \in \bN}$ and $(g_n)_{n \in \bN}$ are in the same equivalence class if there exists an $N \in \bN$, such that for all $n > N$, $f_n = g_n$, meaning they are ``eventually equal". See also \cite[Section 3.2]{rasekh2021filterquotient} for a more detailed description.

\begin{remark} \label{rem:too many simplices}
	Under minor conditions on the $\infty$-category $\C$, there is a well-defined Yoneda functor $\DD \to \C^{\DD^{op}}$, which induces a conservative functor $\C^{\DD^{op}} \to \prod_\bN \C$, concretely given by evaluating at the $\Delta[n]$. Hence, as long as $\C$ is nice enough (which will be the case in our applications), the behavior of an arbitrary simplicial object in $\C$ is controlled by a countable choice of objects (the $\Delta[n]$).
	
	Interestingly enough, the $\infty$-category $\Ho_\infty(\prod_\cF \sset^{\DD^{op}})$ does not behave this way. Here, it is instructive to carefully analyze its internal logic. The natural number object in $\Ho_\infty(\prod_\cF \sset^{\DD^{op}})$ is given by the constant tuple $(\bN)_{n \in \bN}$ and the terminal object by $(1)_{n\in \bN}$. By definition an ``internal natural number'' is a morphism $(1)_{n \in \bN} \to (\bN)_{n \in \bN}$, which, as we explained right above, corresponds to a collection of morphisms $1 \to \bN$, or just a natural sequence	$(a_n)_{n \in \bN}$, up to eventual equality. This means the set of internal natural numbers is the set of natural sequences up to eventual equality, which is an (externally) uncountable set. 
	
	Now, for every internal natural number $(a_n)_{n \in \bN}$ we obtain an ``internal simplex'' $\Delta[(a_n)_{n \in \bN}]$. Meaning following the argumentation in the first paragraph, we will get a conservative functor 
	\[\Ho_\infty(\prod_\cF \sset^{\DD^{op}}) \to \prod_{(a_n)_{n \in \bN} \in \Hom_{\prod_\cF \sset^{\DD^{op}}}((1)_{n \in \bN},(\bN)_{n \in \bN})} \Ho_\infty(\prod_\cF \sset).\] 
	However, as we discussed above, the number these equivalence classes of sequences is uncountable, meaning we cannot ``pick'' a countable collection of objects, i.e. generators, therein that could be the image of $\DD$, implying that there can never be an equivalence to an $\infty$-category of simplicial objects.
\end{remark}

\section{Proof: Filter Quotients Model Simplicial Homotopy Type Theory} \label{sec:filter quotient}
This section is dedicated to precise formulations and the proofs of \cref{thm:main filter quotient,thm:main model}. This necessitates introducing (and reviewing) precise definitions and some technical lemmas. First we generalize model categories with $\kT$-shapes.

\begin{definition} \label{def:finitary model}
	A \emph{finitary model category with $\kT$-shapes} is a tuple of data $(\cM,\cV, \kT, \overline{\omega})$, 
	\begin{enumerate}[leftmargin=*]
	 \item A right proper finitary model structure $\cM$ on an elementary topos whose cofibrations are monomorphisms (\cref{def:model category}).
		\item A propositional coherent theory $\kT$ and a model thereof in a coherent category $\cV$, meaning a commutative diagram 
		\[ 
			\begin{tikzcd}
				\kT_1 \arrow[r, "m_1"] \arrow[d] & \Mono(\cV) \arrow[d] \\
				\kT_0 \arrow[r, "m_0"] & \cV 
			\end{tikzcd},
		\]
		such that $m_0$ preserves products and $m_1$ preserves Cartesian morphisms, and fiber-wise preserves join and meet.
		\item A coherent functor $\overline{\omega}\colon\cV\to\cM$, meaning it preserves finite limits, regular epimorphisms and finite unions.
		\item For any object $U \in \cV$, the functor $(\overline{\omega}(U) \times -) \colon \cM \to \cM$ preserves acyclicity of cofibrations.
	\end{enumerate}
\end{definition}

Observe that this definition strictly generalizes \cite[Definition A.13]{riehlshulman2017rezktypes} and we can determine when they coincide.

\begin{lemma} \label{lemma:finite vs. infinite}
	 A finitary model category with $\kT$-shapes  $(\cM,\cV, \kT, \overline{\omega})$	is a model category with $\kT$-shapes if and only if the underlying category of $\cM$ is locally presentable and the model structure is cofibrantly generated.
\end{lemma}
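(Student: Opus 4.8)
The plan is to compare the two definitions axiom by axiom, observing that the only difference between a \emph{finitary} model category with $\kT$-shapes and a model category with $\kT$-shapes (in the sense of \cite[Definition A.13]{riehlshulman2017rezktypes}) is the size hypothesis on the underlying model structure: cocompleteness versus finite cocompleteness, and, correspondingly, the presence of a set of generating (acyclic) cofibrations. So the proof amounts to isolating precisely where those hypotheses enter and checking that everything else transfers verbatim.

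For the forward direction, suppose $(\cM,\cV,\kT,\overline{\omega})$ is a finitary model category with $\kT$-shapes whose underlying category is locally presentable and whose model structure is cofibrantly generated. Then the finitary model structure on the elementary topos underlying $\cM$ is in fact a full (co)complete model structure in the sense of \cref{def:model category}: local presentability supplies all small (co)limits and, since an elementary topos that is locally presentable is a Grothendieck topos, the remaining structural requirements of \cite[Definition A.13]{riehlshulman2017rezktypes} (right properness, cofibrations being monomorphisms, cofibrant generation) are exactly what we have assumed. Conditions (2), (3), (4) of \cref{def:finitary model} are \emph{identical} to the corresponding conditions in \cite[Definition A.13]{riehlshulman2017rezktypes} — the coherent theory $\kT$, its model $(m_0,m_1)$ in $\cV$, the coherent functor $\overline{\omega}$, and the acyclicity-preservation property of $\overline{\omega}(U)\times(-)$ — so there is nothing to check there. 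Hence $(\cM,\cV,\kT,\overline{\omega})$ is a model category with $\kT$-shapes.

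For the converse, suppose $(\cM,\cV,\kT,\overline{\omega})$ is a model category with $\kT$-shapes. By definition \cite[Definition A.13]{riehlshulman2017rezktypes} its underlying category is a Grothendieck topos — in particular locally presentable — and its model structure is cofibrantly generated and right proper with cofibrations the monomorphisms. Restricting attention to finite (co)limits shows the model structure is in particular a finitary model structure on an elementary topos, so axiom (1) of \cref{def:finitary model} holds; axioms (2)–(4) again coincide on the nose. Thus $(\cM,\cV,\kT,\overline{\omega})$ is a finitary model category with $\kT$-shapes whose underlying category is locally presentable and whose model structure is cofibrantly generated.

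The only real subtlety — and the one step I would present carefully rather than wave at — is the identification ``locally presentable elementary topos $=$ Grothendieck topos,'' which is what makes axiom (1) of the two definitions match up once the size hypotheses are added; this is a standard fact (e.g.\ a cocomplete elementary topos with a generator is Grothendieck), but it is the hinge on which the equivalence turns. Everything else is a matter of observing that Definitions \ref{def:finitary model}(2)–(4) were deliberately written to reproduce \cite[Definition A.13]{riehlshulman2017rezktypes} clause for clause, so that the two notions differ \emph{only} in whether one demands all colimits and a generating set, or merely the finite fragment.
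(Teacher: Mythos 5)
Your proposal is correct and matches the paper's treatment: the paper offers no explicit proof of this lemma, presenting it as an immediate clause-by-clause comparison of \cref{def:finitary model} with \cite[Definition A.13]{riehlshulman2017rezktypes}, which is precisely what you carry out. The one nontrivial hinge you isolate --- that a locally presentable elementary topos is the same thing as a Grothendieck topos --- is exactly the standard fact needed to reconcile the first clauses of the two definitions, with the remaining clauses coinciding verbatim.
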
 

Having established the proper model categorical definition, we now need a precise definition of a model of simplicial homotopy type theory. One common model of type theories is given by a comprehension category over $\C$ (i.e. a Grothendieck fibration over $\C$ with a Cartesian functor to the target projection $\C^\rightarrow \to \C$). Given the additional structure present in sHoTT, we need to adjust this definition, resulting in a \emph{comprehension category with shapes}. See \cite[Definition A.5]{riehlshulman2017rezktypes} for a precise formulation. 

In particular, right above \cite[Theorem A.16]{riehlshulman2017rezktypes}, the authors assign to each model category with $\kT$-shapes such a comprehension category in a way that makes no use of presentability or cofibrant generation and hence applies in the same manner to finitary model categories with $\kT$-shapes. We call it the \emph{induced comprehension category with shapes}. We are now ready to precisely state and prove \cref{thm:main model}.

\begin{theorem}[Precise formulation of \cref{thm:main model}] \label{thm:main model precise}
 Let $(\cM,\cV, \kT, \overline{\omega})$ be a finitary model category with $\kT$-shapes. Then the induced comprehension category with shapes has pseudo-stable coherent tope logic with type eliminations for tope disjunction, and also pseudo-stable extension types satisfying relative function extensionality.
\end{theorem}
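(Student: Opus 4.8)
The plan is to follow the proof of \cite[Theorem A.16]{riehlshulman2017rezktypes} essentially line by line, tracking each appeal to local presentability or cofibrant generation and replacing it by a finitary argument. As already observed, the passage from the data $(\cM,\cV,\kT,\overline{\omega})$ to the induced comprehension category with shapes uses neither hypothesis, so the only task is to reverify the three assertions: pseudo-stable coherent tope logic with type eliminations for tope disjunction, pseudo-stable extension types, and relative function extensionality. Throughout, a type over a context $\Gamma$ is a fibration into $\Gamma$, the display maps are the corresponding projections, substitution is pullback, and only \emph{pseudo}-stability is available because pullbacks and (local) exponentials in a topos are defined merely up to canonical isomorphism; the coherence bookkeeping needed to organise these isomorphisms is formally identical to the locally presentable case and is imported unchanged.

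For the tope logic, the connectives $\top,\bot,\wedge,\vee$ are transported from the coherent category $\cV$: the model $m_\bullet$ of $\kT$ in $\cV$ interprets topes and their operations, and the coherent functor $\overline{\omega}$ carries these to subobjects of the corresponding contexts in $\cM$, preserving finite limits, finite unions and regular epimorphisms and hence $\top,\bot,\wedge,\vee$ up to the canonical comparison maps. Pseudo-stability under substitution is then immediate since both $\overline{\omega}$ and the display-map pullbacks preserve finite limits. Type elimination for tope disjunction asserts that, given topes $\phi,\psi$ in a context $\Gamma$ and fibrations over $\Gamma.\phi$ and $\Gamma.\psi$ that agree over $\Gamma.(\phi\wedge\psi)$, there is a fibration over $\Gamma.(\phi\vee\psi)$ restricting to the given data. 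Realising $\phi\vee\psi$ in $\cM$ as the union $\overline{\omega}(\phi)\cup\overline{\omega}(\psi)$, the square with corners $\Gamma.(\phi\wedge\psi),\Gamma.\phi,\Gamma.\psi,\Gamma.(\phi\vee\psi)$ is a pushout of monomorphisms which, in an elementary topos, is simultaneously a pullback; the required fibration is the resulting glued object, by the gluing lemma for fibrations along such pushout-pullback squares of cofibrations. Since cofibrations are monomorphisms, the model structure on $\cM$ is automatically left proper (every object is cofibrant, as $0$ is strict), and only finite colimits intervene, so no infinitary input is needed.

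For extension types I would use the Leibniz (pullback-hom) construction: given a shape inclusion $\phi\hookrightarrow\psi$ and a fibration $A$ over $\Gamma.\psi$, the extension type is the Leibniz hom $\langle\,\overline{\omega}(\phi)\hookrightarrow\overline{\omega}(\psi),\,A\,\rangle$ formed using local Cartesian closure of $\cM$. That this is again a fibration, acyclic when $A$ is an acyclic fibration or when $\phi\hookrightarrow\psi$ is in addition a weak equivalence, is the pullback-hom form of the pushout-product property for shape inclusions; the ``cofibration half'' is automatic because monomorphisms in a topos are closed under finite products and pushouts, and the ``acyclic half'' follows from hypothesis (4) of \cref{def:finitary model} --- that $\overline{\omega}(U)\times(-)$ preserves acyclic cofibrations --- together with the two-out-of-three property and left properness in the pushout defining the Leibniz product. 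Pseudo-stability of the extension types is the pullback-stability of the Leibniz hom, needing only finite limits and the local exponentials, all present in the topos $\cM$. Relative function extensionality --- that restriction along a shape inclusion is a fibration of extension types, acyclic when the inclusion is a weak equivalence, and compatible with homotopies in the type family --- then follows from these facts and two-out-of-three exactly as in \cite[Theorem A.16]{riehlshulman2017rezktypes}.

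The hard part will be the replacement arguments in the last two paragraphs. In the cofibrantly generated setting the pushout-product and gluing statements are checked on a generating set and then propagated by a transfinite small-object argument; here neither the generators nor the transfinite colimits are available, and the statements must be derived directly from the intrinsic structure of an elementary topos carrying a right proper finitary model structure whose cofibrations are exactly the monomorphisms. Concretely, I would need: monomorphisms are closed under pushout and under Leibniz product; a pushout of a monomorphism along a monomorphism is also a pullback, so that the gluing lemma for weak equivalences is available via left and right properness without any factorisation; and hypothesis (4) of \cref{def:finitary model} supplies the acyclic-cofibration half of the pushout-product property as a standalone input rather than as a consequence of a generating family. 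Once these purely finitary lemmas are in place, the verification of the tope-theoretic and extension-type axioms and of their pseudo-stability is formally the same as in the locally presentable case, and can be quoted from \cite[Theorem A.16]{riehlshulman2017rezktypes}.
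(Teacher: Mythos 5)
Your proposal is correct and follows essentially the same route as the paper: both reduce the statement to the proofs of \cite[Theorems A.16 and A.17]{riehlshulman2017rezktypes}, observing that these only use finite limits, (local) Cartesian closure, the topos-theoretic behaviour of monomorphisms, and condition (4) of \cref{def:finitary model}, none of which require local presentability or cofibrant generation. The finitary lemmas you flag as ``the hard part'' are precisely the facts the paper implicitly relies on when asserting that the original arguments apply verbatim, so your version just makes the same proof more explicit.
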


\begin{proof}[Proof of \cref{thm:main model}]
	First we immediately observe that it has pseudo-stable coherent tope logic \cite[Definition A.7]{riehlshulman2017rezktypes}, following the discussion in \cite[Above Theorem A.16]{riehlshulman2017rezktypes}. Next, we want to prove that we have pseudo-stable extension types. Following \cite[Definition A.10]{riehlshulman2017rezktypes}, we only need to prove that for a given diagram a suitable representing object exists. The proof, given in \cite[Theorem A.16]{riehlshulman2017rezktypes}, purely relies on the existence of well-behaved pullbacks and Cartesian closure, meaning the exact same proof is still valid. Finally, to deduce relative function extensionality, we similarly apply the argument in \cite[Theorem A.17]{riehlshulman2017rezktypes}, which only requires the property of a Cartesian closed model structure.
\end{proof}

\begin{remark} \label{rem:strictification}
	The paragraph after \cite[Theorem A.18]{riehlshulman2017rezktypes} explains that we can then use the methods from \cite{lumsdainewarren2015localuniverses} to construct a strict comprehension category with shapes having strictly stable coherent tope logic with type eliminations for tope disjunction and strictly stable extension types satisfying relative function extensionality, giving us a model for simplicial homotopy type theory. This strictification process has been done explicitly in \cite{weinberger2022strict} for a specific class of models, namely categories of simplicial objects in type-theoretic model topoi, in the sense of Shulman \cite{shulman2019inftytoposunivalent}.
\end{remark}
	
We now proceed to the second goal and show that finitary model categories with $\kT$-shapes are compatible with filter quotients. 

\begin{definition} \label{def:compatible filters}
 Let $(\cM,\cV, \kT, \overline{\omega})$ be a finitary model category with $\kT$-shapes. A \emph{model filter for $\kT$-shapes} is a triple $(\Phi_\kT,\Phi_\cV,\Phi_\cM)$, satisfying the following conditions:
	\begin{itemize}[leftmargin=*]
		\item $\Phi_\kT$ is a filter of subterminal objects	on $\kT$.
		\item $\Phi_\cV$ is a filter of subterminal objects on $\cV$.
		\item $\Phi_\cM$ is a model filter on $\cM$.
		\item The functor $\kT_0 \to \cV$ restricts to a functor $\Phi_{\kT} \to \Phi_{\cV}$, or, equivalently, it induces a functor $\kT_{\Phi_\kT} \to \cV_{\Phi_\cV}$.
		\item The functor $\cV \to \cM$ restricts to a functor $\Phi_{\cV} \to \Phi_{\cM}$, or, equivalently, it induces a functor $\cV_{\Phi_\cV} \to \cM_{\Phi_\cM}$.
	\end{itemize}
\end{definition}

We now work our way towards a proof of \cref{thm:main filter quotient}. This requires several technical lemmas. We commence with the following observation, stating the compatibility between filter quotients and Grothendieck fibrations. It relies on an alternative characterization of filter quotients via filtered colimits. See \cite[Theorem 2.11]{rasekh2025filtermodelcat}, \cite[Lemma 3.15]{rasekh2025filterhott} for a more detailed discussion.

\begin{lemma} \label{lemma:Grothendieck fibration}
	Let $P\colon\D \to \C$ be a Grothendieck fibration and $\Phi$ a filter of subterminal objects on $\C$. Then there is an induced Grothendieck fibration $P_\Phi\colon\D_\Phi \to \C_\Phi$, that is functorial in $\C$. 
\end{lemma}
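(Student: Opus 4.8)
The plan is to use the filtered-colimit characterization of filter quotients referenced just before the statement. Concretely, for a filter $\Phi$ of subterminal objects on $\C$, one has $\C_\Phi \simeq \mathrm{colim}_{U \in \Phi^{op}} \C_{/U}$, where the transition functors along $W \leq U$ are the pullback (restriction) functors $\C_{/U} \to \C_{/W}$, $(X \to U) \mapsto (X \times_U W \to W)$, and the colimit is taken in the (large) category of categories; the component of the colimit cocone at $U = 1$ (or rather, the canonical comparison) recovers the projection $P_\Phi$. (Here I am using that each $U \in \Phi$ is subterminal, so $\C_{/U}$ is just the full subcategory of objects admitting the unique map to $U$, and $\Phi^{op}$ is filtered by the intersection-closure axiom.) The strategy is then: (i) pull the Grothendieck fibration $P\colon \D \to \C$ back along $U$ to get a fibration $P_U \colon \D_{/U} \to \C_{/U}$ for each $U$, where $\D_{/U} := \D \times_\C \C_{/U}$; (ii) observe these are compatible with the restriction functors, so they assemble into a morphism of filtered diagrams of categories; (iii) pass to the colimit, using that filtered colimits of categories commute with the pullbacks and fibered-category structures involved, to obtain $P_\Phi \colon \D_\Phi \to \C_\Phi$; (iv) check directly that the colimit of a filtered diagram of Grothendieck fibrations, over a filtered diagram of base categories, is again a Grothendieck fibration; (v) note functoriality in $\C$ is inherited because every construction used — slices, pullback of fibrations, filtered colimit — is functorial.

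The heart of the argument is step (iv): showing that a filtered colimit of fibrations is a fibration. Here I would argue pointwise on cartesian lifts. Given an object $d \in \D_\Phi$ and a morphism $f\colon c' \to P_\Phi(d)$ in $\C_\Phi$, both $d$ and $f$ are represented at some stage $U \in \Phi$ — i.e.\ by $\widetilde{d} \in \D_{/U}$ and $\widetilde{f}$ in $\C_{/U}$ with $P_U(\widetilde{d})$ matching the codomain of $\widetilde{f}$ after passing to a common refinement $W \leq U$, which exists by intersection-closure. Since $P_U$ (being a pullback of the fibration $P$) is a fibration, $\widetilde{f}$ has a cartesian lift $\widetilde{g}$ in $\D_{/W}$; its image in $\D_\Phi$ is the required lift of $f$. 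For cartesianness, one uses that the universal property of a cartesian morphism only involves finitely many morphisms and equalities at a time, so it can be tested at a single stage $W' \leq W$ in the filtered diagram, where it holds because cartesian morphisms are stable under the pullback $\D \times_\C \C_{/U} \to \D$ and $\C_{/U}$'s projection reflects/preserves the relevant data. The same finiteness principle handles well-definedness modulo $\sim_\Phi$: two representatives agreeing on a common refinement give the same cartesian lift up to $\sim_\Phi$ by the essential uniqueness of cartesian lifts.

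I expect step (iv), and specifically verifying cartesianness survives the colimit, to be the main obstacle — not because it is deep, but because one must be careful that the hom-sets of $\D_\Phi$ are themselves filtered colimits ($\Hom_{\D_\Phi}(a,b) = \mathrm{colim}_{U} \Hom_\D(a \times U, b)$ after identifying objects), so the universal property defining ``cartesian'' must be checked as a statement about a diagram of sets that commutes in the colimit, which forces every instance to already commute at some finite stage. Once this bookkeeping is set up the rest is formal. An alternative, possibly cleaner route that avoids some of this: verify the two defining clauses of a Grothendieck fibration directly in $\C_\Phi$ using the explicit hom-description of \cref{def:filter quotient category}, lifting a morphism $f\colon V \times X \to P_\Phi(d)$-representative against $P$ after multiplying through by a suitable $W \in \Phi$ and taking a cartesian lift in $\D$ of the component map; functoriality in $\C$ then follows since $P_\Phi$ on morphisms is induced levelwise. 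I would present whichever of these is shorter once the details are written out, but the filtered-colimit argument is conceptually the one that most transparently yields the asserted functoriality.
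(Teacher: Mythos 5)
The paper does not actually write out a proof of this lemma: it only remarks that the argument relies on the filtered-colimit description of filter quotients and refers the reader to \cite[Theorem 2.11]{rasekh2025filtermodelcat} and \cite[Lemma 3.15]{rasekh2025filterhott}. Your proposal uses exactly that description, so you have found the route the paper has in mind, and the overall strategy is sound.

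One point in step (ii) is understated and should be made explicit. The square relating the restriction functor $r_{WU}\colon \C_{/U}\to\C_{/W}$ to the forgetful functors $i_U\colon\C_{/U}\to\C$ and $i_W\colon\C_{/W}\to\C$ does \emph{not} commute, not even up to natural isomorphism: there is only a non-invertible comparison $X\times W\to X$. Consequently the restriction functors on the base do not induce transition functors $\D_{/U}=\D\times_\C\C_{/U}\to\D_{/W}$ by mere functoriality of strict pullback along a commuting square of bases, which is what ``observe these are compatible'' suggests. You already need the fibration hypothesis at this stage: the transition functor sends $d$ (with $P(d)\to U$) to the domain of a chosen cartesian lift of the projection $P(d)\times W\to P(d)$, i.e.\ it is the transport functor of $P$ along the 2-cell $i_W r_{WU}\Rightarrow i_U$. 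This is not an obstruction, but it is where the cartesian structure first enters, and it also changes how you should justify step (v): functoriality in $\C$ now rests on naturality of cartesian lifts (a standard but non-trivial fact about fibrations) rather than on strict pullback functoriality. With that adjustment, the remaining bookkeeping — that filteredness allows every cartesianness verification to descend to a finite stage — is correct and is the standard way to push a fibration through a filtered colimit.
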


\begin{lemma} \label{lemma:ktov}
	Let $(\cM,\cV, \kT, \overline{\omega})$ be a finitary model category with $\kT$-shapes and $(\Phi_\kT,\Phi_\cV,\Phi_\cM)$ a model filter for $\kT$-shapes. Then $\Phi_{\kT}$ induces a commutative diagram
	\[ 
		\begin{tikzcd}
			(\kT_1)_{\Phi_\kT} \arrow[r, "m_1"] \arrow[d] & \Mono(\cV)_{\Phi_\cV} \arrow[r, "\simeq"] \arrow[d] & \Mono(\cV_{\Phi_\cV}) \arrow[dl] \\
			(\kT_0)_{\Phi_\kT} \arrow[r, "m_0"] & \cV_{\Phi_\cV} 
		\end{tikzcd},
	\]
	such that the bottom functor preserves	products and the top functor preserves the lattice structure in the fibers.
\end{lemma}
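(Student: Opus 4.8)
The plan is to build the asserted diagram by applying the filter quotient construction ``objectwise'' to the model $(m_0,m_1)$ of $\kT$ in $\cV$ from \cref{def:finitary model}(2), and then to check that each piece of structure descends. The routine part goes as follows. By the compatibility conditions in \cref{def:compatible filters}, $m_0\colon\kT_0\to\cV$ carries $\Phi_\kT$ into $\Phi_\cV$, hence descends to $m_0\colon(\kT_0)_{\Phi_\kT}\to\cV_{\Phi_\cV}$; since $m_0$ preserves finite products and $P_\Phi$ preserves finite limits by \cref{prop:filter quotient category}(1), this descended functor again preserves products. Applying \cref{lemma:Grothendieck fibration} to the Grothendieck fibrations $\kT_1\to\kT_0$ and $\Mono(\cV)\to\cV$ (with the filters $\Phi_\kT$ and $\Phi_\cV$) yields Grothendieck fibrations $(\kT_1)_{\Phi_\kT}\to(\kT_0)_{\Phi_\kT}$ and $\Mono(\cV)_{\Phi_\cV}\to\cV_{\Phi_\cV}$, and the functoriality clause of that lemma turns the morphism of fibrations $m_1$ over $m_0$ into a morphism of fibrations on the quotients. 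Since Cartesian morphisms are characterized by a universal property, and the fiberwise meets and joins are finite (co)limit operations---all preserved by $P_\Phi$ and commuting with the filtered colimits that compute the filter quotient---the descended $m_1$ still preserves Cartesian morphisms and the fiberwise lattice structure.

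The substantive step, which I expect to be the main obstacle, is the middle equivalence $\Mono(\cV)_{\Phi_\cV}\simeq\Mono(\cV_{\Phi_\cV})$ over $\cV_{\Phi_\cV}$; this is what makes the right-hand triangle (with the forgetful functor as the diagonal arrow) commute. I would view $\Mono(\cV)$ as a full subcategory of the arrow category $\cV^{\rightarrow}$ and use the description of the Hom-sets of a filter quotient as filtered colimits indexed by $\Phi_\cV$ (\cite[Theorem 2.11]{rasekh2025filtermodelcat}). Because a morphism of arrows is a pair of morphisms satisfying one commuting-square equation, which is a finite limit of sets, and filtered colimits commute with finite limits in $\set$, this produces a canonical equivalence $(\cV^{\rightarrow})_{\Phi_\cV}\simeq(\cV_{\Phi_\cV})^{\rightarrow}$ over $\cV_{\Phi_\cV}$; the same colimit comparison shows it restricts to a full embedding $\Mono(\cV)_{\Phi_\cV}\hookrightarrow(\cV_{\Phi_\cV})^{\rightarrow}$. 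It then remains to identify the essential image with $\Mono(\cV_{\Phi_\cV})$. One inclusion is immediate from \cref{prop:filter quotient category}(1), since $P_{\Phi_\cV}$ preserves monomorphisms. For the converse I would argue that if $[f]\colon X\to Y$ is a monomorphism in $\cV_{\Phi_\cV}$ represented by some $f\colon X\times U\to Y$, then the condition that its kernel pair coincides with the diagonal is a finite-limit condition that, by the colimit description of $\cV_{\Phi_\cV}$, already holds for the restriction $f|_W\colon X\times W\to Y$ for some $W\leq U$ in $\Phi_\cV$; hence $f|_W$ is a genuine monomorphism in $\cV$ representing the same object of $(\cV_{\Phi_\cV})^{\rightarrow}$, using $X\times W\cong X$ in $\cV_{\Phi_\cV}$. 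The delicate part is making this ``detection at a finite stage'' precise: one has to track not merely that the kernel pair stabilizes but that its comparison with the diagonal does, which is where the filtered-colimit bookkeeping must be done with care.

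Finally, with this equivalence in hand I would assemble the statement: it makes the right triangle commute, transports the preservation of Cartesian morphisms and of fiberwise meets and joins for $m_1$ through it, and combines with the product-preservation of $m_0$. A minor additional check is that $\cV_{\Phi_\cV}$ is again coherent, so that ``the fiberwise lattice structure of $\Mono(\cV_{\Phi_\cV})$'' is meaningful; this follows from the preservation of finite limits and finite colimits---hence of regular epimorphisms and of image factorizations---recorded in \cref{prop:filter quotient category}(1).
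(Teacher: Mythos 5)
Your proposal is correct and follows essentially the same route as the paper's (much terser) proof: descend $m_0$ via \cref{def:compatible filters}, descend $m_1$ via \cref{lemma:Grothendieck fibration}, get product- and lattice-preservation from \cref{prop:filter quotient category}, and obtain $\Mono(\cV)_{\Phi_\cV}\simeq\Mono(\cV_{\Phi_\cV})$ by characterizing monomorphisms through the pullback (kernel-pair) condition detected at some stage $U\in\Phi_\cV$. Your filtered-colimit bookkeeping through the arrow category just makes explicit what the paper compresses into its final sentence, and the ``detection at a finite stage'' step you flag as delicate is exactly the point the paper settles by citing the finite-limit preservation in \cref{prop:filter quotient category}.
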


\begin{proof}
 The compatibility of $m_0$ follows by definition (\cref{def:compatible filters}), and the associated compatibility of $m_1$ follows from \cref{lemma:Grothendieck fibration}. By \cref{prop:filter quotient category}, products in $\C_{\Phi_{\C}}$ are given by products in $\C$ and hence are preserved by $m_0$. We similarly deduce that $m_1$ preserves the lattice structure in the fiber. Finally, a morphism $f$ in $\V_{\Phi_\cV}$ is a monomorphism if there exists a $U$ in $\Phi_{\cV}$ such that $f \times U$ is a monomorphism in $\V$ (as monomorphism are characterized via a pullback condition and \cref{prop:filter quotient category}). This gives us the desired equivalence $\Mono(\cV)_{\Phi_\cV}\simeq \Mono(\cV_{\Phi_\cV})$.
\end{proof}

\begin{lemma} \label{lemma:regular functor}
 Let $F\colon\C \to \D$ be a regular functor between regular categories. Let $\Phi_\C, \Phi_\D$ be a filter of subobject on $\C,\D$, such that $F$	restricts to a functor $\Phi_\C \to \Phi_\D$. Then $F$ induces a functor $\C_{\Phi_\C} \to \D_{\Phi_\D}$, which is regular.
\end{lemma}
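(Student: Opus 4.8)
The plan is to construct $F_\Phi$ directly from the hom-set description of the filter quotient, verify functoriality via the filtered-colimit picture, and then deduce regularity by playing $F_\Phi$ off against the two projection functors $P_{\Phi_\C}$ and $P_{\Phi_\D}$.

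First I would set $F_\Phi\colon\C_{\Phi_\C}\to\D_{\Phi_\D}$ equal to $F$ on objects (recall $\C_{\Phi_\C}$ and $\C$ have the same objects), and on a morphism $[f]\colon X\to Y$ represented by $f\colon X\times U\to Y$ with $U\in\Phi_\C$, I would take the class of
\[
F(X)\times F(U)\xrightarrow{\ \sim\ }F(X\times U)\xrightarrow{\ F(f)\ }F(Y).
\]
This makes sense because $F$, being regular, preserves finite products (giving the displayed isomorphism) and monomorphisms, hence sends subterminal objects to subterminal objects, and by hypothesis carries $\Phi_\C$ into $\Phi_\D$, so $F(U)\in\Phi_\D$ is a legitimate index. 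The clean way to handle the bookkeeping is the filtered-colimit characterization recalled before \cref{lemma:Grothendieck fibration}: $\Hom_{\C_{\Phi_\C}}(X,Y)=\operatorname{colim}_{U\in\Phi_\C^{\mathrm{op}}}\Hom_\C(X\times U,Y)$, and $F$ composed with the canonical isomorphisms $F(X\times U)\cong F(X)\times F(U)$ and the colimit-structure maps computing $\Hom_{\D_{\Phi_\D}}(F(X),F(Y))$ assembles into a cocone — here one uses that $W\le U$ in $\Phi_\C$ forces $F(W)\le F(U)$ in $\Phi_\D$, by preservation of monos and subterminality — hence a well-defined map on hom-sets, independent of representatives. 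Compatibility with identities and with the (Kleisli-type) composition law of a filter quotient is then a routine diagram chase using only functoriality of $F$ and preservation of products, so $F_\Phi$ is a functor, and by construction $F_\Phi\circ P_{\Phi_\C}=P_{\Phi_\D}\circ F$.

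Next I would check that $F_\Phi$ preserves finite limits. By \cref{prop:filter quotient category} the categories $\C_{\Phi_\C}$, $\D_{\Phi_\D}$ have finite limits and the projections preserve them; moreover, via the colimit description, any finite diagram in $\C_{\Phi_\C}$ lifts — after choosing representatives over a common index $U\in\Phi_\C$, possible since $\Phi_\C$ is downward directed — to a diagram in $\C$ whose limit $P_{\Phi_\C}$ carries onto the limit in $\C_{\Phi_\C}$. Since $F$ preserves that limit and $F_\Phi P_{\Phi_\C}=P_{\Phi_\D}F$, the functor $F_\Phi$ preserves it; it suffices to treat the terminal object, where $F_\Phi P_{\Phi_\C}(1)=P_{\Phi_\D}F(1)$ is terminal, and pullbacks, which reduce to pullbacks in $\C$ in the same way. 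The same bookkeeping, together with pullback-stability of regular epis in $\C$ and $\D$, shows $\C_{\Phi_\C}$ and $\D_{\Phi_\D}$ are themselves regular, so that asking $F_\Phi$ to be regular is meaningful.

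Finally, and this is where I expect the main obstacle, I would show $F_\Phi$ preserves regular epimorphisms; the difficulty is that being a regular epi is not visible on a single representative. The route: in a regular category regular epis coincide with extremal epis. Given a regular epi $g\colon X\to Y$ in $\C_{\Phi_\C}$, write $g=P_{\Phi_\C}(\phi)\circ\iota$ with $\iota$ an isomorphism and $\phi$ a morphism of $\C$ — every morphism of the quotient has this shape, since $X\times U$ and $X$ become isomorphic in $\C_{\Phi_\C}$ — and factor $\phi=m\circ e$ in $\C$ with $e$ a regular epi and $m$ a monomorphism. Then $P_{\Phi_\C}(e)$ is a regular epi, $P_{\Phi_\C}(m)$ a monomorphism (\cref{prop:filter quotient category}), and extremality of $g$ forces $P_{\Phi_\C}(m)$ to be an isomorphism; unwinding the filtered colimit, an isomorphism of $\C_{\Phi_\C}$ is represented by a morphism that becomes an isomorphism after multiplying by some $W\in\Phi_\C$, so $m\times\operatorname{id}_W$ is an isomorphism of $\C$. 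Applying the regular functor $F$ preserves this factorization ($F(\phi)=F(m)\circ F(e)$, with $F(e)$ regular epi and $F(m)$ mono), preserves products, and sends $W$ into $\Phi_\D$, so $F(m)\times\operatorname{id}_{F(W)}$ is an isomorphism of $\D$, whence $P_{\Phi_\D}(F(m))$ is an isomorphism of $\D_{\Phi_\D}$. Therefore
\[
F_\Phi(g)=P_{\Phi_\D}(F(\phi))\circ F_\Phi(\iota)=P_{\Phi_\D}(F(m))\circ P_{\Phi_\D}(F(e))\circ F_\Phi(\iota)
\]
is a regular epi composed with isomorphisms, hence a regular epi, and $F_\Phi$ is regular. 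The two genuinely delicate inputs — that finite limits, and that isomorphisms, in a filter quotient are controlled by representatives over a common index in $\Phi$ — both follow from the filtered-colimit characterization of $\C_\Phi$ (cf.\ \cite[Theorem 2.11]{rasekh2025filtermodelcat}).
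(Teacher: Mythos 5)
Your proof is correct and follows the same basic strategy as the paper: exploit the commuting square $P_{\Phi_\D}\circ F = F_\Phi\circ P_{\Phi_\C}$ and the preservation properties of the projections from \cref{prop:filter quotient category}. The paper's own proof is much terser — it simply asserts that $P_{\Phi_\C}$ ``preserves and reflects'' the relevant structure and lets the conclusion fall out — whereas you do the work of justifying why that is enough: you note that finite diagrams in $\C_{\Phi_\C}$ lift to $\C$ after choosing representatives over a common index in $\Phi_\C$, and that every morphism of the quotient is, up to isomorphism, a $P_{\Phi_\C}$-image. Your treatment of regular epimorphisms is a genuine addition: the paper gives no argument, while you pass through the extremal-epi characterization, take the image factorization in $\C$, use the filtered-colimit description to see that an isomorphism of $\C_{\Phi_\C}$ is detected by some $W\in\Phi_\C$, and then push the whole picture through $F$. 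That argument is sound and plugs a gap that the paper's citation of \cref{prop:filter quotient category} (which only states preservation, not reflection) leaves open. One cosmetic point: the paper's proof text also mentions finite unions (so it is really proving ``coherent,'' matching its intended application to $\overline{\omega}$), whereas you, consistently with the lemma as stated, stop at regularity; if you wanted to match the paper's usage exactly you would add a line about finite unions, which follows by the same lifting argument.
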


\begin{proof}
	We need to prove that $F$ preserves finite limits, epimorphisms and finite unions. We have the following diagram
	\[
		\begin{tikzcd}
			\C \arrow[d, "P_{\Phi_\C}"] \arrow[r, "F"] & \D \arrow[d, "P_{\Phi_\D}"]\\ 
			\C_{\Phi_\C} \arrow[r, "F"] & \D_{\Phi_\D}
		\end{tikzcd}. 
	\] 
	By \cref{prop:filter quotient category}, the functor $P_{\Phi_\C}$ preserves and reflects finite limits, epimorphisms and finite unions. Hence the desired result follows from the fact that $F\colon \C \to \D$ is regular.
\end{proof}

\begin{theorem}[Precise formulation of \cref{thm:main filter quotient}] \label{thm:main filter quotient precise}
	Let $(\cM,\cV, \kT, \overline{\omega})$ be a finitary model category with $\kT$-shapes. Also, let $(\Phi_\kT,\Phi_\cV,\Phi_\cM)$ be a model filter for $\kT$-shapes. Then $(\cM_{\Phi_{\cM}},\cV_{\Phi_{\cV}}, \kT_{\Phi_{\kT}}, \overline{\omega})$ is a finitary model category with $\kT_{\Phi_\kT}$-shapes. Moreover, if $\kT$ has a strict interval, then so does $\kT_{\Phi_{\kT}}$.
\end{theorem}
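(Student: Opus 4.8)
The plan is to verify the four clauses of \cref{def:finitary model} for the tuple $(\cM_{\Phi_{\cM}},\cV_{\Phi_{\cV}}, \kT_{\Phi_{\kT}}, \overline{\omega})$ --- where $\overline{\omega}$ now abusively denotes the functor $\cV_{\Phi_\cV}\to\cM_{\Phi_\cM}$ that \cref{def:compatible filters} guarantees is induced by $\overline{\omega}\colon\cV\to\cM$ --- and then to check separately that a strict interval in $\kT$ descends to one in $\kT_{\Phi_\kT}$. Clause~(1) is largely bookkeeping on top of earlier results. Since $\Phi_\cM$ is a model filter on the finitary model category $\cM$, \cref{thm:filter quotient model structure} endows $\cM_{\Phi_\cM}$ with the model structure $(\cF_{\Phi_\cM},\cC_{\Phi_\cM},\cW_{\Phi_\cM})$, which is right proper because $P_{\Phi_\cM}$ preserves right properness, and by \cref{prop:filter quotient category} the quotient of an elementary topos is again an elementary topos, in particular finitely (co)complete. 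That its cofibrations are precisely its monomorphisms follows since $f\in\cC_{\Phi_\cM}$ iff $f\times U$ is a cofibration --- hence, by the hypothesis on $\cM$, a monomorphism --- of $\cM$ for some $U\in\Phi_\cM$, while, exactly as in the last step of the proof of \cref{lemma:ktov}, a morphism of $\cM_{\Phi_\cM}$ is a monomorphism iff it becomes one after multiplying by some element of $\Phi_\cM$, monomorphisms being cut out by a finite-limit condition that $P_{\Phi_\cM}$ preserves and reflects.

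Clauses~(2) and~(3) are soft: they amount to the fact that the projection functors preserve and reflect the relevant categorical structure. The category $\cV_{\Phi_\cV}$ is again coherent because finite limits, image factorizations, finite unions of subobjects, and the pullback-stability of the latter two are all preserved and reflected by $P_{\Phi_\cV}$ (by \cref{prop:filter quotient category} together with the preservation and reflection facts used in \cref{lemma:regular functor}). That $\kT_{\Phi_\kT}$ is again a propositional coherent theory carrying a model in $\cV_{\Phi_\cV}$ --- fitting into the commutative diagram of \cref{lemma:ktov}, whose bottom functor preserves products and whose top functor preserves Cartesian morphisms and the fibrewise joins and meets --- is exactly the content of that lemma, which in turn relies on \cref{lemma:Grothendieck fibration} for the fibration $\Mono(\cV)\to\cV$ and on the identification $\Mono(\cV)_{\Phi_\cV}\simeq\Mono(\cV_{\Phi_\cV})$. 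Finally, by \cref{def:compatible filters} the coherent functor $\overline{\omega}$ restricts to a functor $\Phi_\cV\to\Phi_\cM$ and hence descends to $\overline{\omega}\colon\cV_{\Phi_\cV}\to\cM_{\Phi_\cM}$; this descended functor is again coherent by the argument of \cref{lemma:regular functor}, where ``regular'' may be upgraded to ``coherent'' because $P_{\Phi_\cV}$ and $P_{\Phi_\cM}$ also preserve and reflect finite unions.

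The remaining clause~(4) is the only genuinely model-categorical point and is where I expect the main obstacle to lie. Fix $U\in\cV_{\Phi_\cV}$; since $P_{\Phi_\cV}$ is the identity on objects we read $U$ as an object of $\cV$, so that $\overline{\omega}(U)$ is the object $P_{\Phi_\cM}\overline{\omega}(U)$ and products with it in $\cM_{\Phi_\cM}$ agree with products in $\cM$ (\cref{prop:filter quotient category}). The preliminary point is that, because the cofibrations and weak equivalences of $\cM$ are $\Phi_\cM$-product stable, so is their intersection, and moreover $\cC_{\Phi_\cM}\cap\cW_{\Phi_\cM}=(\cC\cap\cW)_{\Phi_\cM}$: given witnesses $V_1,V_2\in\Phi_\cM$ for membership of $f$ in $\cC_{\Phi_\cM}$ and in $\cW_{\Phi_\cM}$, any common lower bound $W\le V_1,V_2$ in $\Phi_\cM$ works, since for subterminal $W$ with $W\le V_i$ one has $W\times V_i\cong W$ and hence $f\times W\cong(f\times V_i)\times W$ lies in $\cC$ (resp.\ in $\cW$). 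Consequently $j$ is an acyclic cofibration of $\cM_{\Phi_\cM}$ iff $j\times W$ is an acyclic cofibration of $\cM$ for some $W\in\Phi_\cM$. Given such $j$ and $W$, we have $(\overline{\omega}(U)\times j)\times W\cong\overline{\omega}(U)\times(j\times W)$, which is an acyclic cofibration of $\cM$ by clause~(4) for the original tuple; applying the characterization once more shows $\overline{\omega}(U)\times j$ is an acyclic cofibration of $\cM_{\Phi_\cM}$, establishing clause~(4).

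For the final assertion, a strict interval in a coherent theory is a designated sort together with two points and a linear order satisfying the coherent axioms of \cite[3.1]{riehlshulman2017rezktypes}; since $P_{\Phi_\kT}$ preserves finite limits, the fibration $\kT_1\to\kT_0$, and the fibrewise lattice structure --- including the bottom element, to which the distinctness axiom $0\wedge 1\vdash\bot$ refers --- the image under $P_{\Phi_\kT}$ of a strict interval of $\kT$ is a strict interval of $\kT_{\Phi_\kT}$ (should the quotient degenerate to the trivial theory, every coherent axiom holds there vacuously, so nothing is lost). To summarize the difficulty: clauses~(1)--(3) and the strict interval are a matter of tracking structure through $P_\Phi$ and quoting \cref{prop:filter quotient category,lemma:Grothendieck fibration,lemma:regular functor,lemma:ktov}, whereas clause~(4) genuinely requires matching the $\Phi_\cM$-product-stable description of the quotient model structure with the product-preservation of $\overline{\omega}$ and the hypothesis on $\overline{\omega}(U)\times(-)$ downstairs.
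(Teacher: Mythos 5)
Your proposal is correct and follows essentially the same route as the paper: verifying the clauses of \cref{def:finitary model} via \cref{prop:filter quotient category}, \cref{thm:filter quotient model structure}, \cref{lemma:ktov}, and \cref{lemma:regular functor}, and then transporting the strict interval along the projection $P_{\Phi_\kT}$, which preserves the products, joins, and meets expressing its axioms. The only difference is one of granularity: you spell out details the paper leaves implicit, notably the identification $\cC_{\Phi_\cM}\cap\cW_{\Phi_\cM}=(\cC\cap\cW)_{\Phi_\cM}$ underlying clause~(4) and the mono/cofibration comparison in clause~(1), both of which are consistent with the cited results.
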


\begin{proof}[Proof of \cref{thm:main filter quotient}]
	We check the conditions in \cref{def:finitary model}, and finally the conditions of a strict interval. 
	\begin{enumerate}[leftmargin=*]
		\item $\cM_{\Phi_\cM}$ is an elementary topos (\cref{prop:filter quotient category}), right proper (\cref{thm:filter quotient model structure}), and cofibrations are monomorphisms (by combining \cref{prop:filter quotient category,thm:filter quotient model structure}).
		\item As $\Phi_\kT$ is compatible with $\Phi_\cV$, this follows from \cref{lemma:ktov}.
		\item A $\Phi_\cV$ is compatible with $\Phi_\cM$, this follows from \cref{lemma:regular functor}. 
		\item For all $U$, $\overline{\omega}(U) \times -\colon \cM_{\Phi_\cM} \to \cM_{\Phi_\cM}$ preserves acyclic cofibrations, by \cref{thm:filter quotient model structure} and the fact that $\overline{\omega}(U) \times -\colon \cM \to \cM$ preserves acyclic cofibrations.
		\item By construction, a strict interval is the data of an object $\2$ in $\kT_0$ along with two points $0,1$ therein satisfying $8$ axioms given in \cite[3.1]{riehlshulman2017rezktypes}, which in the Grothendieck fibration $\kT_1 \to \kT_0$ are expressed via products, joins and meets. By \cref{prop:filter quotient category}, in the following diagram 
		\[
			\begin{tikzcd}
				\kT_1 \arrow[r, "P_{\Phi_{\kT}}"] \arrow[d] & (\kT_1)_{\Phi_\kT} \arrow[d] \\ 
				\kT_0 \arrow[r, "P_{\Phi_{\kT}}"] & (\kT_0)_{\Phi_{\kT}}		
			\end{tikzcd}
		\] 
		the horizontal functors preserve all these properties, hence $\kT_{\Phi_\kT}$ inherits the strict interval from $\kT$. \qedhere
	\end{enumerate} 
\end{proof}

\begin{remark}
	We would expect that for a model filter for $\kT$-shapes $(\Phi_\kT,\Phi_\cV,\Phi_\cM)$ on simplicial objects in a type-theoretic model topos $\cM^{\DD^{op}}$, we can employ similar methods to the ones in \cite{weinberger2022strict} to deduce that the filter quotient is a strict comprehension category with shapes having strictly stable coherent tope logic with type eliminations for tope disjunction and strictly stable extension types satisfying relative function extensionality, as explained in \cref{rem:strictification}. We will not pursue this further here.
\end{remark}

\section{Proof: A Model	of sHoTT that is not Simplicial} \label{sec:notsimplicial}
In this final section we focus on the proof of \cref{thm:main nonsimplicial}. Throughout this section $\cF$ is a non-principal filter on the set of natural numbers $\bN$. The proof requires some basic observations regarding $\prod_\cF \sset^{\DD^{op}}$. 

\begin{lemma} \label{lemma:unique arrow}
	Let $U$ be a subterminal object in $\Ho_\infty\prod_\cF \sset^{\DD^{op}}$. There is a unique object $A$ in $\prod_\cF \sset^{\DD^{op}}$ with the following properties:
		\begin{enumerate}[leftmargin=*]
			\item $A$ is $0$-truncated.
			\item The $(-1)$-truncation of $A$ is $U$.
			\item $\Map(U,A)$ has $2$ elements. 
			\item The induced map $0 + 1\colon U \coprod U \to A$ is not an equivalence. 
			\item Every non-trivial subobject of $A$ is also a subobject of $U \coprod U$.
			\item The map $0 + 1$ does not have a non-trivial factorization. 
		\end{enumerate}
\end{lemma}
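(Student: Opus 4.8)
The plan is to reduce the statement to its fibrewise ($\cF$-pointwise) analogue over the base $\sset^{\DD^{op}}$ and then to solve that. Write $U=(U_n)_{n\in\bN}$ with each $U_n$ a subterminal object of $\sset^{\DD^{op}}$; after discarding an $\cF$-negligible set of indices this holds for all $n$, and since the only subterminals of the presheaf topos $\sset^{\DD^{op}}$ are $\emptyset$ and the terminal object $1$ (any nonempty subfunctor of the terminal presheaf on $\DD\times\DD$ is everything, as every object admits a morphism from every other), each $U_n\in\{\emptyset,1\}$. The first task is to observe that each of the six conditions, together with ``subterminal'' and ``$0$-truncated'', is preserved and reflected by the projection $P_\cF$ and is detected $\cF$-pointwise. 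For this I would invoke \cref{prop:filter quotient category} (preservation and reflection of finite (co)limits, monomorphisms, images and subobjects) together with the explicit description of $\prod_\cF\sset^{\DD^{op}}$ recalled after \cref{thm:main nonsimplicial}: being $0$-truncated is the condition that the diagonal $A\to A\times A$ be a monomorphism; the $(-1)$-truncation of $A$ is the image of $A\to 1$; and conditions (3)--(6) are (non-)existence statements about maps out of $U$, about equivalences, about subobjects of $A$, and about factorizations of $0+1$, each of which is local in $\cF$ by \cref{prop:filter quotient category,thm:filter quotient model structure}. This reduces everything to the fibrewise problem: for each $n$, produce a unique $0$-truncated object $A_n$ of $\sset^{\DD^{op}}$ satisfying (1)--(6) relative to $U_n$.

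For fibrewise existence take $A_n:=U_n\times\Delta[1]$, where $\Delta[1]$ is the walking arrow viewed as a discrete (hence $0$-truncated) bisimplicial set, i.e.\ the presheaf represented by $([1],[0])$; equivalently, $A=U\times\Delta[1]$ in $\prod_\cF\sset^{\DD^{op}}$, the ``walking arrow relative to $U$''. Checking (1)--(6) is then a short, explicit computation with the subobject lattice of $\Delta[1]$, whose proper nonzero subobjects are exactly $\{0\}$, $\{1\}$ and $\partial\Delta[1]=U_n\sqcup U_n$: this gives (5) at once, and since there is nothing strictly between $\partial\Delta[1]$ and $\Delta[1]$ it gives (6), while (2), (3) and (4) are immediate.

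For uniqueness, suppose $A'$ satisfies (1)--(6). Condition (3) supplies the two ``points'' $0,1\colon U\to A'$ and hence $0+1\colon U\sqcup U\to A'$, and by (2) the object $A'$ sits entirely over $U$. The image of $0+1$ is a subobject of $A'$ that is not all of $A'$ by (4), hence a subobject of $U\sqcup U$ by (5); together with $0$-truncatedness and (3) this forces $0+1$ to be a monomorphism which is not an isomorphism. Now (6) says $U\sqcup U\hookrightarrow A'$ has no intermediate subobject, while (5) says every proper subobject of $A'$ already lies in $U\sqcup U$; combined with $0$-truncatedness these force $A'$ to be the ``atomic'' extension of $U\sqcup U$ obtained by freely adjoining a single arrow from $0$ to $1$ over $U$, that is, $A'\simeq U\times\Delta[1]$. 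Fibrewise this is the combinatorial assertion that a $0$-truncated simplicial set with exactly two vertices, a non-degenerate edge, and no subobjects or factorizations of $\partial\Delta[1]\hookrightarrow A'$ beyond the obvious ones must be $\Delta[1]$, which one settles by a brief analysis of non-degenerate simplices.

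The step I expect to be the main obstacle is the uniqueness argument: showing that the precise package (3)--(6) --- a counting condition, a non-collapse condition, and two minimality conditions --- genuinely pins $A$ down up to equivalence, ruling out competitors such as an object carrying two distinct edges from $0$ to $1$, or a retract of $\Delta[1]$, or a degenerate variant, using only (5) and (6); and then confirming that this rigidity, being formulated purely in terms of finite (co)limits, monomorphisms, subobjects and factorizations, transports intact through the filter quotient by the results of \cref{sec:model cat}.
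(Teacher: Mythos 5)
Your approach diverges substantially from the paper's. The paper does not perform a fibrewise reduction: it treats $A$ directly as a single object, deduces from (1)--(3) that $A_0 = \{0,1\}\times U$, argues from (4) that $A$ must contain an extra cell, from (5) that this cell must run from $0$ to $1$ (yielding a map $i\colon \Delta^1\times U\to A$), and from (6) that $i$ is an equivalence. Your plan instead tries to push all six conditions through $P_\cF$ and solve the problem in each fiber $\sset^{\DD^{op}}$.

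There is a genuine gap in the reduction step, and it concerns condition (3). You classify conditions (3)--(6) together as ``(non-)existence statements'' that are ``local in $\cF$,'' but (3) is a \emph{cardinality} statement, and cardinality of hom-sets does not transport through a filter quotient unless $\cF$ is an ultrafilter. Concretely, $\Hom_{\prod_\cF\C}\bigl((X_n),(Y_n)\bigr)$ is the filtered colimit $\mathrm{colim}_{S\in\cF}\prod_{n\in S}\Hom(X_n,Y_n)$, i.e.\ the filter product of the pointwise hom-sets. If, as in your intended fibrewise solution, each $\Map(U_n,A_n)$ is a two-element set, then $\Map(U,A)$ is the filter product of two-element sets; for a non-ultrafilter such as the Fr\'echet filter this is the set of binary sequences modulo eventual equality, which has cardinality $2^{\aleph_0}$, not $2$. (By \L o\'s this collapse to $2$ \emph{does} happen for ultrafilters, but \cref{thm:main nonsimplicial} is stated for arbitrary non-principal filters.) So ``each $A_n$ satisfies (3)'' and ``$A$ satisfies (3)'' are not equivalent, and neither \cref{prop:filter quotient category} nor \cref{thm:filter quotient model structure} gives you such an equivalence: those results govern limits, monomorphisms, subobjects and the model structure, not the number of morphisms between two objects. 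Your reduction therefore does not establish either the existence or the uniqueness claim in the form the statement requires, and the lemma cannot simply be checked $\cF$-pointwise.

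Two smaller remarks. First, the paper's proof is precisely the ``brief analysis of non-degenerate simplices'' that you defer to at the end of your uniqueness step, so even in the fiber you have not supplied the core argument. Second, whatever interpretation of (3) one settles on so that the lemma holds for general non-principal filters, the right place to carry out the argument is at the level of the object $A$ itself, as the paper does, rather than after a fibrewise splitting --- the splitting is exactly what forces you to confront the failure of \L o\'s for non-ultrafilters head-on.
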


\begin{proof}
	First, observe $\Delta^1 \times U$ has this property. Let $A$ be another object with this property. By assumption $A_0 = \{0,1\} \times U$. Now, by assumption the induced map $0 + 1\colon U \coprod U \to A$ is not an equivalence, meaning $A$ must have additional cells, such as $\sigma \in A_n$. If the boundary of $\sigma$ inside $A_0$ is all $0$ (or $1$), then restricting to the fiber of $A$ over $0$ (or $1$) would be an additional subobject, which is a contradiction. Hence, there must be a non-trivial cell from $0$ to $1$ (or $1$ to $0$), giving us a map $i\colon \Delta^1\times U \to A$. As this is a factorization of $0 +1$, it follows that $i$ is an equivalence. 
\end{proof}

\begin{remark}
	It is an interesting historical remark that this characterization is directly motivated by work in \cite{toen2005unicity}, where a similar axiomatization is used to classify automorphisms of the $\infty$-category of $\infty$-categories.
\end{remark}

\begin{definition}
	An $\infty$-category $\C$ has \emph{suitable propositions} if it satisfies the following conditions:
	\begin{itemize}[leftmargin=*]
		\item $\C$ has a terminal object $1$. 
		\item $\C$ has the coproducts of the terminal object. 
		\item The inclusion $\tau_{-1}\C \to \C$ has a left adjoint. 
	\end{itemize}
\end{definition}

We have the following straightforward observation about $\infty$-categories with suitable propositions.

\begin{lemma} \label{lemma:yoneda}
 Let $\C$ be an $\infty$-category with suitable propositions and denote the terminal object by $1$. 
	\begin{itemize}[leftmargin=*]
		\item $\C$ is tensored over finite sets, given by coproducts of the terminal objects. 
		\item $\C^{\DD^{op}}$ is tensored over finite simplicial sets.
		\item There is a Yoneda embedding $\Y\colon \DD \to \sset^{fin} \to \C^{\DD^{op}}$, that maps $[n]$ to $\Y[n] = \Delta^n \otimes 1$.
	\end{itemize}
	We call $\Y$ the \emph{Yoneda functor}.
\end{lemma}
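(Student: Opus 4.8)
The plan is to assemble the terminal object and its coproducts into the two claimed tensorings and then obtain $\Y$ by composing the second with the ordinary Yoneda embedding; the first two items are read as the existence of the functors $-\otimes 1\colon \mathrm{Fin}\to\C$ (with $\mathrm{Fin}$ the $\infty$-category of finite sets) and $-\otimes\underline 1\colon\sset^{fin}\to\C^{\DD^{op}}$ built from coproducts of the terminal object of $\C$. All three points are soft; the only real work is producing honest $\infty$-functors rather than bare levelwise assignments.

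\emph{Tensoring over finite sets.} By hypothesis $\C$ has a terminal object $1$ together with its coproducts $\coprod_S 1$, and the full subcategory of $\C$ spanned by the objects of the form $\coprod_S 1$ ($S$ a finite set) is closed under finite coproducts, since $\coprod_{S_1}1\sqcup\dots\sqcup\coprod_{S_k}1\simeq\coprod_{S_1\sqcup\dots\sqcup S_k}1$ and the empty coproduct is $\coprod_\emptyset 1$. Using the standard fact that $\mathrm{Fin}$ is the free $\infty$-category with finite coproducts on a single object, the assignment $\ast\mapsto 1$ thus extends to an essentially unique finite-coproduct-preserving functor $-\otimes 1\colon\mathrm{Fin}\to\C$ with $S\otimes 1\simeq\coprod_S 1$; equivalently, $-\otimes 1$ is the partial left adjoint, defined on finite sets, of the functor $\Map_\C(1,-)$, so it is characterized by a natural equivalence $\Map_\C(S\otimes 1,X)\simeq\Map_\C(1,X)^S$. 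This is the first claim.

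\emph{Tensoring over finite simplicial sets, and the Yoneda functor.} Colimits in $\C^{\DD^{op}}$ are computed levelwise, so $\C^{\DD^{op}}$ has a terminal object $\underline 1$ (the constant diagram at $1$) together with its coproducts. Every finite simplicial set $K$ is levelwise finite, hence corestricts to a functor $K\colon\DD^{op}\to\mathrm{Fin}$, and composing with $-\otimes 1$ yields $K\otimes\underline 1:=(-\otimes 1)\circ K\in\C^{\DD^{op}}$, i.e. the diagram $[m]\mapsto\coprod_{K_m}1$; this is visibly functorial in $K$, giving $-\otimes\underline 1\colon\sset^{fin}\to\C^{\DD^{op}}$, and a routine end computation based on $\Map_\C(\coprod_{K_m}1,-)\simeq\Map_\C(1,-)^{K_m}$ checks its expected universal property (in particular $\Map_{\C^{\DD^{op}}}(\Delta^m\otimes\underline 1,G)\simeq\Map_\C(1,G_m)$), so $\C^{\DD^{op}}$ is tensored over $\sset^{fin}$ in the relevant sense. (If $\C$ in addition has all finite coproducts, the analogous prescription $(K\otimes X)_m=\coprod_{K_m}X_m$ gives the full two-variable tensoring $\sset^{fin}\times\C^{\DD^{op}}\to\C^{\DD^{op}}$.) Finally, precomposing with the ordinary Yoneda embedding $y\colon\DD\hookrightarrow\sset^{fin}$, $[n]\mapsto\Delta^n$, we set $\Y:=(-\otimes\underline 1)\circ y$; then $\Y$ factors as $\DD\to\sset^{fin}\to\C^{\DD^{op}}$ and $\Y[n]=\Delta^n\otimes\underline 1$ has $(\Y[n])_m=\coprod_{\Hom_\DD([m],[n])}1$, as asserted.

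\emph{The main obstacle.} The only genuinely non-formal input is the coherent functoriality of $-\otimes 1$: upgrading the bare existence of the coproducts $\coprod_S 1$ to an $\infty$-functor on $\mathrm{Fin}$ carrying all higher coherences. This is supplied by the universal property of $\mathrm{Fin}$ as the free finite-coproduct $\infty$-category, once one has observed that the objects $\coprod_S 1$ span a full subcategory of $\C$ closed under finite coproducts; everything afterwards is bookkeeping with levelwise colimits in functor categories. One may further note that the universal property above gives $\Map_{\C^{\DD^{op}}}(\Y[m],\Y[n])\simeq\Map_\C(1,\coprod_{\Hom_\DD([m],[n])}1)$, so $\Y$ is fully faithful exactly when the coproducts of the unit of $\C$ are disjoint; we shall only use that $\Y$ is a functor, so this refinement is not needed here.
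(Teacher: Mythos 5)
The paper gives no proof of \cref{lemma:yoneda} at all --- it is asserted as a ``straightforward observation'' --- so there is no argument of the paper to compare yours against; judged on its own, your proof is correct and supplies exactly the details the paper omits. Your decomposition is the natural one: the universal property of $\mathrm{Fin}$ as the free finite-coproduct completion of a point turns the coproducts $\coprod_S 1$ (which do form a full subcategory closed under finite coproducts, by associativity of coproducts) into a coherent functor $-\otimes 1\colon\mathrm{Fin}\to\C$, postcomposition along the levelwise-finite diagrams $\DD^{op}\to\mathrm{Fin}$ gives $-\otimes\underline{1}\colon\sset^{fin}\to\C^{\DD^{op}}$, and $\Y$ is the restriction along $[n]\mapsto\Delta^n$; the end computation $\Map_{\C^{\DD^{op}}}(\Delta^m\otimes\underline{1},G)\simeq\Map_\C(1,G_m)$ is right and is the form of the universal property actually used later. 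Two remarks. First, your reading of ``tensored'' as the one-variable functors $-\otimes 1$ and $-\otimes\underline{1}$ is the correct one: the hypotheses (coproducts of the terminal object only) cannot support a two-variable tensoring, and the paper only ever uses $\Delta^n\otimes 1$ and products with it, so your parenthetical on the full tensoring is rightly marked as beyond the hypotheses. Second, your caveat about full faithfulness could be stated more firmly: ``suitable propositions'' does not make the coproducts of $1$ disjoint (the terminal $\infty$-category satisfies the hypotheses and there $\Y$ is constant), so ``embedding'' in the statement cannot be meant literally; since the paper immediately renames $\Y$ the Yoneda \emph{functor} and only uses functoriality, proving functoriality, as you do, is the right target. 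One genuinely minor point to patch: your use of all of $\mathrm{Fin}$ presupposes the empty coproduct, i.e.\ an initial object, which ``has the coproducts of the terminal object'' arguably does not grant; either run the same argument with nonempty finite sets (the free completion under nonempty finite coproducts), which suffices because every $(\Delta^n)_m$ is nonempty, or note explicitly that an initial object exists in the cases where the lemma is applied.
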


\begin{definition}
	Let $\C$ be an $\infty$-category with suitable propositions, and denote the Yoneda functor by $\Y$. An object $X$ is called \emph{externally discrete} if it is local with respect to the objects $\tau_{-1}X \times \Y[n]$, meaning for all $n \geq 0$ the unique map $\Y[n] \to 1$ induces equivalences 
	\[\Map_\C(\tau_{-1}X, X) \simeq \Map_\C(\Y[n] \times \tau_{-1}X, X).\]
	We denote the full sub-$\infty$-category of externally	discrete objects by $\C^{\mathrm{disc}}$.
\end{definition}

\begin{example} \label{example:discrete}
	Let $\DD \to \prod_\cF \sset^{\DD^{op}}$ be the evident embedding. Then an object $(X_n)_{n \in \bN}$ is externally discrete if and only if the set of natural numbers $n$ for which $X_n$ is a discrete simplicial set is in $\cF$. This, in particular, includes all $(X_n)_{n \in \bN}$ that are not discrete for finitely many indices. In particular, for an increasing sequence of natural numbers that is not eventually constant $(a_n)_{n \in \bN}$, $(S^{a_n})_{n \in \bN} = (\Delta^{a_n} / \partial \Delta^{a_n})_{n \in \bN}$ is externally discrete.
\end{example}

Let us make the following basic observation about this definition. 

\begin{lemma} \label{lemma:discrete in simplicial} 
	 Let $\C$ be an $\infty$-category with suitable propositions, and denote the Yoneda functor by $\Y$. Then $(\C^{\DD^{op}})^{\mathrm{disc}} \simeq \C$.
\end{lemma}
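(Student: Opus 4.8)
The goal is to show that for an $\infty$-category $\C$ with suitable propositions, the externally discrete objects in $\C^{\DD^{op}}$ form an $\infty$-category equivalent to $\C$. Here is the plan.

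\textbf{Outline of the approach.} The plan is to exhibit an explicit adjoint equivalence. In one direction, there is the constant-diagram functor $\mathrm{const}\colon \C \to \C^{\DD^{op}}$, and I would first check that it lands in $(\C^{\DD^{op}})^{\mathrm{disc}}$: for a constant diagram $\mathrm{const}(c)$, the object $\tau_{-1}\mathrm{const}(c) \times \Y[n]$ maps into $\mathrm{const}(c)$ the same way $\tau_{-1}\mathrm{const}(c)$ does, because $\Y[n] = \Delta^n \otimes 1$ is ``levelwise a finite set of copies of $1$'' and a constant diagram cannot distinguish the levels — more precisely, $\Map_{\C^{\DD^{op}}}(\Y[n] \times \tau_{-1}\mathrm{const}(c), \mathrm{const}(c))$ computes, via the tensoring of \cref{lemma:yoneda} and the fact that $\mathrm{const}$ is right adjoint to the colimit functor, as $\Map_\C(\mathrm{colim}_{\DD^{op}}(\Delta^n \otimes \tau_{-1}c), c)$, and $\mathrm{colim}_{\DD^{op}} \Delta^n = \ast$ since $\Delta^n$ is a representable (hence its colimit over $\DD^{op}$, i.e. its set of path components, is a point). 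In the other direction, I would use evaluation at $[0]$, $\mathrm{ev}_0\colon \C^{\DD^{op}} \to \C$, restricted to externally discrete objects. The two composites are then to be identified with the identity.

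\textbf{Key steps in order.} First, establish the tensoring and the behavior of $\Y[n]$ under $\DD^{op}$-colimits as above; this is the engine of the argument. Second, show $\mathrm{const}$ factors through $(\C^{\DD^{op}})^{\mathrm{disc}}$ and that $\mathrm{ev}_0 \circ \mathrm{const} \simeq \mathrm{id}_\C$, which is immediate. Third — the substantive step — show that for an externally discrete $X$, the counit-type map $\mathrm{const}(\mathrm{ev}_0 X) \to X$ (or, dually, the canonical map $X \to \mathrm{const}(\mathrm{ev}_0 X)$ coming from $[0]$ being initial in $\DD^{op}$... one should pick the variance that actually produces a map; since $[0]$ is initial in $\DD$, it is terminal in $\DD^{op}$, so evaluation at $[0]$ receives a map from every object, giving $X \to \mathrm{const}(X_{[0]})$ — here I would instead use that $[0]$ is a retract-absorbing object and build the comparison from the unit of $\mathrm{colim} \dashv \mathrm{const}$) is an equivalence. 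The way to see this: it suffices to check the map is an equivalence after applying $\Map_{\C^{\DD^{op}}}(\Y[n], -)$ for all $n$, because the $\Y[n]$ generate $\C^{\DD^{op}}$ under colimits (as $\C^{\DD^{op}}$ is cocomplete and every diagram is a colimit of tensored representables $\Delta^n \otimes c$, and each $c$ is a retract of a coproduct of copies of $1$... actually $c$ need not be, so more care is needed — one reduces instead to checking on $\Map_{\C^{\DD^{op}}}(\Delta^n \otimes 1, -) = (\text{-})_{[n]}$, i.e. levelwise). Then externally discreteness says exactly that $X_{[n]} \simeq \Map(\Y[n] \times \tau_{-1}X, X) \simeq \Map(\tau_{-1}X,X) \simeq X_{[0]}$ compatibly, forcing $X$ to be constant.

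\textbf{Main obstacle.} The delicate point is the reduction of ``equivalence in $\C^{\DD^{op}}$'' to the external-discreteness condition, i.e. getting from the hypothesis phrased via $\tau_{-1}X$ and the mapping spaces $\Map_\C(\tau_{-1}X, X)$ to the statement that the simplicial structure maps $X_{[n]} \to X_{[0]}$ are all equivalences and assemble coherently into an equivalence $X \simeq \mathrm{const}(X_{[0]})$. One has to be careful that $\tau_{-1}X$ is a proposition so that $\Map_\C(\tau_{-1}X, X) \simeq$ the ``fiber of $X$ over its support'', and that the tensoring $\Y[n] \times \tau_{-1}X = \Delta^n \otimes \tau_{-1}X$ interacts correctly with mapping spaces; the coherence (not just levelwise, but as a map of diagrams) is what requires the generation-by-$\Y[n]$ argument rather than a bare levelwise check. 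I expect packaging this coherence cleanly — rather than the individual equivalences — to be the crux, and I would handle it by noting that $\mathrm{const} \dashv \mathrm{ev}_0$ is not quite an adjunction but $\mathrm{colim} \dashv \mathrm{const} \dashv \lim$ is, and externally discrete objects are precisely those for which the canonical comparison between $\mathrm{colim}$, $\mathrm{ev}_0$, and $\lim$ collapses, yielding the equivalence functorially.
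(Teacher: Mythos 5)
The paper records this lemma without proof (it is stated as a ``basic observation''), so your argument has to stand on its own; its first half essentially does, but the substantive half has a genuine gap. On the first half: you should not invoke $\mathrm{colim}_{\DD^{op}}\dashv\mathrm{const}$, since ``suitable propositions'' gives $\C$ only a terminal object, coproducts of it, and a $(-1)$-truncation, so $\C$ need not admit $\DD^{op}$-colimits; this is repairable, because $\Map_{\C^{\DD^{op}}}(A,\mathrm{const}(c))\simeq\lim_{[k]\in\DD}\Map_\C(A_k,c)$ and $N\DD$ is contractible ($[0]$ is terminal in $\DD$), which yields both that constant objects are externally discrete and that $\mathrm{const}\dashv\mathrm{ev}_{[0]}$ holds with no completeness hypotheses (as $[0]$ is initial in $\DD^{op}$).

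The gap is in the reduction ``check on $\Map_{\C^{\DD^{op}}}(\Delta^n\otimes 1,-)=(-)_{[n]}$, i.e.\ levelwise'' and in the later identification $X_{[n]}\simeq\Map(\Y[n]\times\tau_{-1}X,X)$. Since $(\Y[n])_k=\coprod_{(\Delta^n)_k}1$, the same end formula gives $\Map_{\C^{\DD^{op}}}(\Y[n],X)\simeq\Map_\C(1,X_n)$, and (using the distributivity implicit in forming $\Y[n]\times\tau_{-1}X$) $\Map_{\C^{\DD^{op}}}(\Y[n]\times\tau_{-1}X,X)\simeq\Map_\C(\tau_{-1}X,X_n)$: these are the levels of $X$ seen through $\Map_\C(1,-)$, resp.\ $\Map_\C(\tau_{-1}X,-)$, not the levels themselves. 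Consequently, external discreteness only says that the total degeneracies $X_0\to X_n$ become equivalences after applying $\Map_\C(\tau_{-1}X,-)$; to conclude that the canonical map $\mathrm{const}(X_0)\to X$ is an equivalence you need the subterminal objects of $\C$ to jointly detect equivalences, which ``suitable propositions'' does not provide and which your argument nowhere establishes. The point is not pedantic: if $\C$ is pointed (spectra, pointed spaces), then $\C$ has suitable propositions, its unique subterminal object is the zero object, $\tau_{-1}X$ and every $\Y[n]$ are terminal, so every object of $\C^{\DD^{op}}$ is externally discrete and the asserted equivalence $(\C^{\DD^{op}})^{\mathrm{disc}}\simeq\C$ fails at this level of generality. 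Any correct proof must therefore use more about $\C$ than the stated hypotheses (in the paper's application $\C$ comes equipped with an equivalence $\C^{\DD^{op}}\simeq\prod_\cF\sset^{\DD^{op}}$, which is what supplies such detection properties); your related claim that the $\Y[n]$ generate $\C^{\DD^{op}}$ under colimits fails for the same reason. As written, the key step of your proof does not go through.
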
 

\begin{lemma} \label{lemma:retract}
	Let $\C$ be an $\infty$-category with suitable propositions, and denote the Yoneda functor by $\Y$. An object $X$ is externally discrete if and only if it is local with respect to $\Y[1]^n$.
\end{lemma}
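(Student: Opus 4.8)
The plan is to unravel both conditions and reduce them to a comparison, inside $\sset^{fin}$, between the cube $(\Delta^1)^n$ and the simplices $\Delta^k$, transported into $\C^{\DD^{op}}$ along the product‑ and colimit‑preserving functor $\sset^{fin}\to\C^{\DD^{op}}$, $S\mapsto S\otimes 1$, supplied by \cref{lemma:yoneda}. By definition, $X$ is externally discrete exactly when, for every $n$, the projection $\Y[n]\times\tau_{-1}X\to\tau_{-1}X$ is inverted by $\Map_{\C^{\DD^{op}}}(-,X)$, while "$X$ is local with respect to $\Y[1]^n$" is the same assertion with the cube $\Y[1]^n=(\Delta^1)^n\otimes 1$ in place of the simplex $\Y[n]=\Delta^n\otimes 1$. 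So the lemma amounts to saying that, for a fixed $X$, inverting all the simplex‑projections is equivalent to inverting all the cube‑projections.

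For the implication from cube‑locality to external discreteness — the content hinted at by the name of the lemma — I would use that the simplex is a retract of the cube. The "staircase" poset map $[n]\to[1]^n$, $i\mapsto(1^i,0^{n-i})$, is a section of the "sum of coordinates" map $[1]^n\to[n]$; applying the nerve and then $(-)\otimes 1$ exhibits $\Y[n]$ as a retract of $\Y[1]^n$, hence the map $\Y[n]\times\tau_{-1}X\to\tau_{-1}X$ as a retract, in the arrow $\infty$‑category over $\tau_{-1}X$, of $\Y[1]^n\times\tau_{-1}X\to\tau_{-1}X$. Since the maps inverted by $\Map_{\C^{\DD^{op}}}(-,X)$ form a class closed under retracts, $\Y[1]^n$‑locality forces $\Y[n]$‑locality for every $n$, i.e.\ external discreteness.

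For the converse no retraction is available — the cube is connected and its poset of vertices is not totally ordered, so it is not a retract of any coproduct of simplices — so I would instead argue by a Mayer--Vietoris induction along a cell structure of the cube. Let $\mathcal G$ be the class of finite simplicial sets $S$ for which $(S\otimes 1)\times\tau_{-1}X\to\tau_{-1}X$ is inverted by $\Map_{\C^{\DD^{op}}}(-,X)$. External discreteness says precisely that $\Delta^k\in\mathcal G$ for all $k$, and $\Delta^0\in\mathcal G$ trivially. Because $(-)\otimes 1$ preserves pushouts, because $\tau_{-1}X\times(-)$ preserves the resulting pushouts (using that $\tau_{-1}X$ is subterminal), and because $\Map_{\C^{\DD^{op}}}(-,X)$ turns pushouts into pullbacks, $\mathcal G$ is closed under pushouts $S=S_1\cup_{S_0}S_2$ with $S_0,S_1,S_2\in\mathcal G$: the square of mapping spaces becomes a pullback of equivalences $\Map(\tau_{-1}X,X)\xleftarrow{\ \sim\ }\Map((S_i\otimes1)\times\tau_{-1}X,X)$ over the corresponding equivalence for $S_0$, hence is itself an equivalence. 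It then remains to present $(\Delta^1)^n=N([1]^n)$ as an iterated pushout of simplices along simplices; this is the standard triangulation of the cube, organized so that every gluing locus is a simplex and therefore already lies in $\mathcal G$ (for instance, induct on $n$ via $(\Delta^1)^n=\Delta^1\times(\Delta^1)^{n-1}$, using that $\Delta^1\times(-)$ preserves pushouts in $\sset$ and that each $\Delta^1\times\Delta^k$ is itself an iterated pushout of simplices along simplices by its prism decomposition — never gluing along $\partial\Delta^1=\Delta^0\sqcup\Delta^0\notin\mathcal G$). Hence $(\Delta^1)^n\in\mathcal G$, which is the asserted locality.

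The main obstacle is this converse direction, and within it two points that need genuine care: first, arranging the cell structure on $(\Delta^1)^n$ so that every attaching map has simplicial (in particular connected) image, so that all intermediate stages remain in $\mathcal G$ and the disconnected $\partial\Delta^1$ is never used as a gluing object; and second, verifying that $\tau_{-1}X\times(-)$ preserves the pushouts in question in $\C^{\DD^{op}}$, which is the place where the concrete structure of $\C^{\DD^{op}}$ — rather than merely the abstract "suitable propositions" on $\C$ — is genuinely used. The retract direction, by contrast, should be essentially formal once the retraction $[n]\hookrightarrow[1]^n\twoheadrightarrow[n]$ is in hand.
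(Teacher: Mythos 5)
Your proof is correct and follows essentially the same route as the paper: the implication from $\Y[1]^n$-locality to external discreteness via the retraction $[n]\hookrightarrow[1]^n\twoheadrightarrow[n]$, and the converse via decomposing the cube into simplices glued along simplices, which the paper simply cites from the proof of Rezk's Lemma 10.3 rather than triangulating by hand as you do. One small caveat: for $n\geq 3$ no ordering of the $n!$ top-dimensional simplices of $(\Delta^1)^n$ makes every gluing locus a single simplex (the last simplex always attaches along the union of its interior facets), but this is harmless, since such unions are themselves pushouts of simplices along simplices and hence lie in your class $\mathcal{G}$, so your Mayer--Vietoris closure argument (or the slightly strengthened induction hypothesis that $(\Delta^1)^{n}\times\Delta^k\in\mathcal{G}$ for all $k$) goes through unchanged.
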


\begin{proof}
	On the one side $\Y[1]^n$ is the colimit of $\Y[n]$ along $\Y[n-1]$ \cite[Proof of Lemma 10.3]{rezk2001css}. Hence, externally discrete implies local with respect to $\Y[1]^n$. On the other side $\Y[n]$ is the retract of $\Y[1]^n$. Hence local with respect to $\Y[1]^n$ implies externally discrete.
\end{proof}

\begin{lemma} \label{lemma:coproducts terminal}
	Let $\C$ be an $\infty$-category, such that there is an equivalence $\C^{\DD^{op}} \simeq \prod_\cF \sset^{\DD^{op}}$.
	\begin{itemize}[leftmargin=*]
		\item There is an equivalence $\tau_{-1}(\C^{\DD^{op}}) \simeq \tau_{-1}\C$.
		\item $\C$ has suitable propositions. 
	\end{itemize}
\end{lemma}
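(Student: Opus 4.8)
The plan is to use the constant-diagram functor $\mathrm{const}\colon\C\to\C^{\DD^{op}}$ as the bridge between $\C$ and $\C^{\DD^{op}}\simeq\prod_\cF\sset^{\DD^{op}}$, together with two formal facts about it. Since $\DD$ has a terminal object $[0]$, its classifying space is contractible, so $\mathrm{const}$ is fully faithful; and since $[0]$ is \emph{initial} in $\DD^{op}$, the functor $\lim_{\DD^{op}}$ is just evaluation $\mathrm{ev}_{[0]}$, giving an adjunction $\mathrm{const}\dashv\mathrm{ev}_{[0]}$ whose counit $\varepsilon_X\colon\mathrm{const}(X_0)\to X$ (where $X_0=X([0])$) has as its value at $[n]$ the map $X_0\to X_n$ induced by the unique arrow $[0]\to[n]$ of $\DD^{op}$. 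From the equivalence I import that $\C^{\DD^{op}}$ has finite limits, finite colimits, and $(-1)$-truncations; the latter items come from the fact that $\prod_\cF\sset^{\DD^{op}}$ is presented by a right proper finitary model structure whose cofibrations are the monomorphisms on an elementary topos, and that filter quotients preserve this structure (\cref{prop:filter quotient category}, \cref{thm:filter quotient model structure}, and their $\infty$-categorical analogues), so the underlying $\infty$-category inherits a subobject classifier and image factorizations. Now $\mathrm{const}$ preserves limits (they are computed objectwise) and colimits (it is a left adjoint), and a finite limit or colimit in $\C^{\DD^{op}}$ of a diagram of constant functors is again constant --- its structure maps are (co)limits of identity maps --- so by full faithfulness of $\mathrm{const}$ it is $\mathrm{const}$ of the corresponding (co)limit in $\C$. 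In particular $\C$ has finite limits and colimits, hence a terminal object $1$ and its finite coproducts.

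For the first bullet, an object $Y\in\C^{\DD^{op}}$ is $(-1)$-truncated iff its diagonal $Y\to Y\times Y$ is an equivalence, and since products and equivalences in $\C^{\DD^{op}}$ are detected objectwise, this happens iff every $Y_n$ is $(-1)$-truncated in $\C$. Given such a $Y$, I claim $\varepsilon_Y$ is an equivalence. For each $n$ the object $Y_n$ is subterminal, so $\Map_\C(-,Y_n)$ is valued in $(-1)$-types; in particular $\Map_\C(Y_n,Y_n)\simeq *$. Composing the counit component $\alpha_n\colon Y_0\to Y_n$ with any of the vertex maps $Y_n\to Y_0$ yields $\mathrm{id}_{Y_0}$ (the corresponding composite of arrows in $\DD$ being $\mathrm{id}_{[0]}$), while the composite in the other order is an endomorphism of $Y_n$, hence $\mathrm{id}_{Y_n}$ by contractibility of $\Map_\C(Y_n,Y_n)$; thus $\alpha_n$ is an equivalence for all $n$, so $\varepsilon_Y$ is an objectwise equivalence and therefore an equivalence. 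Hence $\tau_{-1}(\C^{\DD^{op}})$ is precisely the essential image of $\mathrm{const}$ restricted to $\tau_{-1}\C$; since $\mathrm{const}$ sends $(-1)$-truncated objects to $(-1)$-truncated objects and is fully faithful, it restricts to an equivalence $\tau_{-1}\C\xrightarrow{\ \simeq\ }\tau_{-1}(\C^{\DD^{op}})$.

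For the second bullet it remains to exhibit a left adjoint to the inclusion $\iota\colon\tau_{-1}\C\hookrightarrow\C$. I take the composite
\[
L\colon\quad \C \xrightarrow{\ \mathrm{const}\ } \C^{\DD^{op}} \xrightarrow{\ \tau_{-1}\ } \tau_{-1}(\C^{\DD^{op}}) \xrightarrow{\ \simeq\ } \tau_{-1}\C,
\]
using the truncation functor on $\C^{\DD^{op}}$ and the equivalence of the first bullet. The natural equivalences
\begin{align*}
\Map_{\tau_{-1}\C}(LX,Q)
&\simeq \Map_{\C^{\DD^{op}}}\!\big(\tau_{-1}\mathrm{const}(X),\,\mathrm{const}(Q)\big)\\
&\simeq \Map_{\C^{\DD^{op}}}\!\big(\mathrm{const}(X),\,\mathrm{const}(Q)\big)
\ \simeq\ \Map_\C(X,Q),
\end{align*}
obtained from the equivalence $\mathrm{const}\colon\tau_{-1}\C\simeq\tau_{-1}(\C^{\DD^{op}})$, the adjunction $\tau_{-1}\dashv\iota$ inside $\C^{\DD^{op}}$, and full faithfulness of $\mathrm{const}$, show $L\dashv\iota$. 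Together with the terminal object and its finite coproducts from the first paragraph, this is exactly the statement that $\C$ has suitable propositions.

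The step I expect to require the most care is the input that $\prod_\cF\sset^{\DD^{op}}$ genuinely admits $(-1)$-truncations (and finite limits and colimits): this is precisely where the model-categorical machinery enters, and once it is granted the argument for $\C$ runs purely formally. A secondary point worth being explicit about is that the ``essentially constant'' argument is coherent rather than merely levelwise, which is handled by phrasing it as ``$\varepsilon_Y$ is an objectwise equivalence, hence an equivalence'' instead of arguing componentwise.
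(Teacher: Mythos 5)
Your overall strategy — using the constant-diagram functor as the bridge, with the adjunction $\mathrm{const}\dashv\mathrm{ev}_{[0]}$ and full faithfulness of $\mathrm{const}$ — is sound and in the same spirit as the paper's, but the two arguments diverge in the details. For the first bullet, the paper does not argue via the counit; it instead observes that $\tau_{-1}(\C^{\DD^{op}})\simeq\Fun(\DD^{op},\tau_{-1}\C)$ (since $(-1)$-truncation is detected objectwise) and that $\tau_{-1}\C$ is a preorder, so maps out of $\DD^{op}$ factor through its preorder reflection, which is terminal. Your counit computation with vertex maps lands in the same place and is a nice explicit version of the same fact; both are correct.

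For the second bullet there is a genuine gap in the step where you conclude that ``$\C$ has finite limits and colimits.'' You assert that a finite (co)limit in $\C^{\DD^{op}}$ of constant functors is again constant because ``its structure maps are (co)limits of identity maps.'' But that description of the structure maps presupposes the (co)limit is computed objectwise, which in a functor category $\Fun(J,\D)$ is only automatic when $\D$ already has the relevant (co)limits — precisely what you are trying to prove. The argument is therefore circular as written. It is fixable: since $\mathrm{ev}_{[0]}$ is a right adjoint (to $\mathrm{const}$), it preserves limits, and $\mathrm{ev}_{[0]}\circ\mathrm{const}\simeq\mathrm{id}_\C$, so $\C$ inherits all finite limits from $\C^{\DD^{op}}$; once $\C$ has finite products, $\mathrm{ev}_{[0]}$ also has a right adjoint (right Kan extension along $\{[0]\}\hookrightarrow\DD^{op}$, whose value at $[n]$ is an $(n{+}1)$-fold product), hence preserves colimits, and the same trick gives finite colimits in $\C$. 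So the conclusion is true, but via a two-stage adjoint argument, not the objectwise one you gave. You should also be aware that you are proving more than required — the lemma only needs the terminal object (which is immediate from the first bullet, since it is $(-1)$-truncated, as the paper observes) and coproducts of the terminal — so a more economical route exists. Your construction of the left adjoint to $\tau_{-1}\C\hookrightarrow\C$ as the composite $\tau_{-1}\circ\mathrm{const}$ followed by the equivalence from bullet one matches the paper's restriction argument and is fine.
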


\begin{proof}
	$(1)$ We can restrict the equivalence to an equivalence of $(-1)$-truncated objects $\tau_{-1}F\colon\tau_{-1}\prod_\cF \sset^{\DD^{op}} \xrightarrow{ \ \simeq \ } \tau_{-1}\C^{\DD^{op}}$. A $(-1)$-truncated object in $\C^{\DD^{op}}$ is precisely a functor $\DD^{op} \to \tau_{-1}\C$. However, by definition, $\tau_{-1}\C$ is a $0$-category (preorder), and so this corresponds to functors out of the $0$-category associated to $\DD^{op}$, which is simply the terminal category, which means $\tau_{-1}(\C^{\DD^{op}}) \simeq \Fun([0],\tau_{-1}\C) \simeq \tau_{-1}\C$.

	$(2)$ First we show $\tau_{-1}\colon\tau_{-1}\C \to \C$ has a left adjoint. The inclusion $\tau_{-1}(\prod_\cF \sset^{\DD^{op}}) \to \prod_\cF \sset^{\DD^{op}}$ has a left adjoint, hence, via the equivalence, the inclusion $\tau_{-1}\C^{\DD^{op}} \to \C^{\DD^{op}}$ has a left adjoint. Combining this with the equivalence from the previous item, we hence get the left adjoint $\C^{\DD^{op}} \to \tau_{-1}\C$. We can now restrict the domain and get the desired left adjoint $\tau_{-1}\colon\C \to \tau_{-1}\C$.

	The terminal object in $\prod_\cF \sset^{\DD^{op}}$ is $(-1)$-truncated, hence it is also $(-1)$-truncated in $\C^{\DD^{op}}$. By the first item it hence lives in $\C$. Now, the coproduct of discrete objects is discrete, hence the coproduct of the terminal object is also in $\C$.
\end{proof}

\begin{lemma} \label{lemma:discrete objects}
	An equivalence $F\colon\prod_\cF \sset^{\DD^{op}} \xrightarrow{ \ \simeq \ } \C^{\DD^{op}}$ restricts to an equivalence $F\colon(\prod_\cF \sset^{\DD^{op}})^{\mathrm{disc}} \xrightarrow{ \ \simeq \ } \C$.
\end{lemma}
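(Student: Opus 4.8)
The plan is to show that $F$ preserves and reflects external discreteness, and then to conclude by \cref{lemma:discrete in simplicial}. Write $\D := \prod_\cF \sset$ for the filter product of simplicial sets, so that $\prod_\cF \sset^{\DD^{op}} \simeq \D^{\DD^{op}}$; applying \cref{lemma:coproducts terminal} to $\C$ through $F$ and to $\D$ through the identity equivalence $\D^{\DD^{op}} \simeq \prod_\cF \sset^{\DD^{op}}$, both $\C$ and $\D$ are $\infty$-categories with suitable propositions, so the Yoneda functors $\Y_\D$, $\Y_\C$ and the full subcategories of externally discrete objects are all defined. Since $(\C^{\DD^{op}})^{\mathrm{disc}} \simeq \C$ by \cref{lemma:discrete in simplicial}, it suffices to show that an object $X$ of $\D^{\DD^{op}}$ is externally discrete if and only if $F(X)$ is.

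I would then invoke \cref{lemma:retract}: on either side the externally discrete objects are exactly those local with respect to the maps $\Y[1]^{\times n} \to 1$ for $n \geq 0$, where $1$ is the terminal object. An equivalence of $\infty$-categories preserves the terminal object and finite products, and, being fully faithful and essentially surjective, it preserves and reflects locality with respect to any class of maps. Thus $F$ carries the class $\{\,\Y_\D[1]^{\times n} \to 1\,\}_{n \geq 0}$ to $\{\,F(\Y_\D[1])^{\times n} \to 1\,\}_{n \geq 0}$, and $X$ is local with respect to the first exactly when $F(X)$ is local with respect to the second. Comparing with the description of $(\C^{\DD^{op}})^{\mathrm{disc}}$, the whole statement reduces to a single point: $F(\Y_\D[1]) \simeq \Y_\C[1]$ in $\C^{\DD^{op}}$.

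To prove this I would pin down $\Y[1] = \Delta^1 \otimes 1$ by an intrinsic characterization. The opening computation in the proof of \cref{lemma:unique arrow} shows that, for any $\infty$-category $\mathscr{E}$ with suitable propositions, $\Y_{\mathscr{E}}[1]$ satisfies the six properties listed there with $U = 1$: it is $0$-truncated, its $(-1)$-truncation is the terminal object, $\Map(1, \Y_{\mathscr{E}}[1])$ has two elements, the induced map $1 \sqcup 1 \to \Y_{\mathscr{E}}[1]$ is not an equivalence and admits no nontrivial factorization, and every nontrivial subobject of $\Y_{\mathscr{E}}[1]$ is a subobject of $1 \sqcup 1$. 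Each of these conditions is phrased purely in terms of truncations, mapping spaces, coproducts of the terminal object, equivalences, subobjects and factorizations, hence is preserved by the equivalence $F$ (using $F(1) \simeq 1$ and $F(1 \sqcup 1) \simeq 1 \sqcup 1$); so $F(\Y_\D[1])$ satisfies the same six properties in $\C^{\DD^{op}}$. On the other hand, transporting \cref{lemma:unique arrow} along $F$ shows that $\C^{\DD^{op}}$ contains a unique object with these six properties for $U = 1$. Since both $\Y_\C[1]$ and $F(\Y_\D[1])$ are such objects, $F(\Y_\D[1]) \simeq \Y_\C[1]$, which finishes the argument.

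The main obstacle is this last step: one must rule out that an arbitrary equivalence could move $\Y[1]$ to an unrelated object, and it is precisely this rigidity that makes external discreteness transportable. The route above resolves it by leaning on \cref{lemma:unique arrow}, and the delicate point is that one should \emph{transport} that lemma along $F$ rather than try to re-run its uniqueness argument directly inside $\C^{\DD^{op}}$, since that argument manipulates fibers of simplicial objects and $\C^{\DD^{op}}$ is not known a priori to possess the required pullbacks. The naive alternative — writing $\Y[1]$ as a colimit of copies of the terminal object and using that equivalences preserve colimits — does not work, because neither $\C$ nor $\D$ need be cocomplete and the relevant colimit need not exist.
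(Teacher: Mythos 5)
Your route is essentially the paper's: \cref{lemma:coproducts terminal} gives suitable propositions on $\C$, \cref{lemma:discrete in simplicial} identifies $(\C^{\DD^{op}})^{\mathrm{disc}}\simeq\C$, \cref{lemma:unique arrow} supplies the rigidity of the interval, and \cref{lemma:retract} turns external discreteness into a locality condition that an equivalence preserves and reflects. Reducing everything to the single case $U=1$ is a harmless economy (the subterminal factor $\tau_{-1}X$ in the locality condition, which you drop when quoting \cref{lemma:retract}, is recovered because $F$ preserves products and $(-1)$-truncations), and you have correctly isolated the crux: $F(\Y_{\D}[1])\simeq\Y_{\C}[1]$.

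The gap is in how you discharge that crux. You assert that ``the opening computation in the proof of \cref{lemma:unique arrow}'' shows that $\Y_{\mathscr{E}}[1]$ satisfies the six conditions for \emph{any} $\infty$-category $\mathscr{E}$ with suitable propositions. That opening observation is an explicit verification inside $\prod_\cF\sset^{\DD^{op}}$ using its bisimplicial description; it is not an abstract argument, and the general statement is false. For instance, any pointed $\infty$-category (pointed spaces, say) has suitable propositions, but there $1\sqcup 1\simeq 1$, hence $\Y[1]=\Delta^1\otimes 1\simeq 1$, and conditions (3) and (4) of \cref{lemma:unique arrow} fail. What you actually need is the claim for the specific $\C$ with $\C^{\DD^{op}}\simeq\prod_\cF\sset^{\DD^{op}}$, and that requires an argument you have not given: since $F$ preserves the terminal object and finite coproducts, finite coproducts of $1$ in $\C^{\DD^{op}}$ (computed levelwise, so constant, hence coming from $\C$) correspond under $F$ to those in the filter product, so one knows that $\Map(1,\coprod_k 1)$ has $k$ elements, that these coproducts are $0$-truncated, what their subobjects are, etc.; with this transported information one can verify the six conditions for $\Y_\C[1]$ levelwise and then conclude by the uniqueness transported along $F$, exactly as you intend. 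The paper is itself terse at precisely this point (it simply asserts that $F$ preserves $U\times\Delta^1$ ``by \cref{lemma:unique arrow}''), so you have found the right difficulty, but as written your citation does not justify the step, and the intermediate general claim you interpose is not true.
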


\begin{proof}
	By \cref{lemma:coproducts terminal}, $\C$ has suitable propositions. So, by \cref{lemma:discrete in simplicial}, $(\C^{\DD^{op}})^{\mathrm{disc}} \simeq \C$. Now, by \cref{lemma:unique arrow}, $F$ preserves $U \times \Delta^1$, for all subterminal objects $U$. Also, $F$ preserves products, meaning $F$ also preserves the $n$-fold product $(\Delta^1)^n \times U$. Hence, $F$ preserves and reflects object local with respect to $(\Delta^1)^n \times U$. By \cref{lemma:retract}, those are precisely the externally discrete objects. 
\end{proof}

With these lemmas at hand, we can prove \cref{thm:main nonsimplicial}.

\begin{proof}[Proof of \cref{thm:main nonsimplicial}]
	Let us assume we have an equivalence $\C^{\DD^{op}} \simeq \prod_\cF \sset^{\DD^{op}}$. By definition of $\DD$ we have an adjunction 
	\[
		\begin{tikzcd}
			\C \arrow[r, hookrightarrow, shift left=2] \arrow[r, shift right=2, "\bot", leftarrow] & \C^{\DD^{op}}
		\end{tikzcd},
	\]
	which, by the assumption and \cref{lemma:discrete objects}, induces an adjunction
	\[
		\begin{tikzcd}
			(\prod_\cF \sset^{\DD^{op}})^{\mathrm{disc}} \arrow[r, hookrightarrow, shift left=2] \arrow[r, shift right=2, "\bot", leftarrow] & \prod_\cF \sset^{\DD^{op}}
		\end{tikzcd}.
	\] 
	In order to finish the proof it suffices to observe such an adjunction cannot exist. 
	
Using an argument analogous to \cref{rem:too many simplices}, we observe that the natural numbers in $\prod_\cF \sset^{\DD^{op}}$ consist of sequences $(a_n)_{n \in \bN}$, where $(a_n)_{n \in \bN}, (b_n)_{n \in \bN}$ are in the same equivalence class if $\{n \in \bN | a_n = b_n \} \in \cF$. Now, let $\bN^{nc}$ denote the set of equivalence of classes of sequences that are not constant. Using the notation from \cref{example:discrete}, let $S = \coprod_{a_n \in \bN^{nc}} (S^{a_n})_{n \in \bN}$. Then $S$ is not an externally discrete object. Indeed, if it was externally discrete, there would be an $N \in \bN$, such that for all $n > N$, $(S_n)_1 \cong (S_n)_0$. Let $a_n = 1$ if $n < N + 5$ and $a_n = n$ otherwise. Then 
\[ ((S^{a_n})_{N + 1})_0 = S^{a_{N+1}}_0 = (S^1)_0 \not\cong (S^1)_1 =S^{a_{N+1}}_1 = ((S^{a_n})_{N + 1})_1 \]
meaning this property is not satisfied by $(S^{a_n})_{n \in \bN}$, which means it will also not hold for $S$, via the coproduct inclusion $(S^{a_n})_{n \in \bN} \hookrightarrow (S_n)_{n \in \bN}$. 

Let us assume the right adjoint of $S$ is defined and denote it by $(R_n)_{n \in \bN}$. Then, by assumption, it is an externally discrete object, with a map $R \to S$, such that for all $a_n \in \bN^{nc}$, there is a diagram 
\[
	\begin{tikzcd}
		(S^{a_n})_{n \in \bN} \arrow[r, dashed, hookrightarrow] \arrow[dr, hookrightarrow] & R \arrow[d] \\ 
		& S 
	\end{tikzcd},
\]
such that if we pull back the diagram along the coproduct inclusion $(S^{a_n})_{n \in \bN} \to S$, we obtain the diagram 
\[ 
\begin{tikzcd}
(S^{a_n})_{n \in \bN} \arrow[r, dashed, hookrightarrow] \arrow[dr, equal] & (S^{a_n})_{n \in \bN} \times_S R \arrow[d] \\ 
		& (S^{a_n})_{n \in \bN}
\end{tikzcd}.
\]
It suffices to observe there does not exist a discrete object $R$ with such a property. 

The desired conclusion will immediately follow from the more general observation, that for every externally discrete object $R = (R_n)_{n \in \bN}$, there exists a sequence $(a_n)_{n \in \bN}$ in $\bN^{nc}$, such that all maps $(S^{a_n})_{n \in \bN} \to (R_n)_{n \in \bN}$ factor through the terminal object. Indeed, in that case the post-composition $(S^{a_n})_{n\in \bN} \to S$ also factors through the point, and cannot be equivalent to the coproduct inclusion.

Let $R = (R_n)_{n \in \bN}$ be externally discrete. Let us now construct such a sequence. For every $n \in \bN$, if $R_n$ is a discrete simplicial set, let $d_n = n$. Otherwise, let $d_n \in \bN$ be the unique natural number with the property that $R_{n0} \xrightarrow{ \ \simeq \ } R_{nm}$ for all $m \leq d_n$, meaning the maximum natural number for which this isomorphism holds. By the external discreteness assumption on $R$, the sequence needs to converge to $\infty$ as $n$ goes to $\infty$. This means the equivalence class of the sequence $d_n$ is an element in $\bN^{nc}$. Let $d_n-1$ denote the predecessor for $d_n$, which exists as $d_n$ is not $0$. Then, by the assumption on $d_n$, every map $(S^{d_n-1})_{n \in \bN} \to (R_n)_{n \in \bN}$ factors through the point, finishing the proof.
\end{proof}

\bibliographystyle{alpha}
\bibliography{main}

\end{document}